\documentclass[10pt]{amsart}
\usepackage{amsmath,amssymb,xypic,array}
\usepackage[T1]{fontenc}
\usepackage{amsfonts,amsthm,epsfig}
\usepackage{setspace}
\usepackage{graphicx}
\usepackage{tikz}
\usepackage{booktabs}
\usepackage{longtable}
\usepackage{geometry}
\usetikzlibrary{trees}
\usepackage{tikz-cd}
\usepackage[english]{babel}
\usepackage[latin1]{inputenc}
\usepackage{color}
\usepackage{mathrsfs}
\usepackage{latexsym}
\usepackage{hyperref}
\usepackage{textcomp}

\newtheorem{Theorem}{Theorem}
\newtheorem{Lemma}[Theorem]{Lemma}
\newtheorem{Prop}[Theorem]{Proposition}
\newtheorem{Def}[Theorem]{Definition}

\newtheorem{Ex}[Theorem]{Example}

\newtheorem*{thm-intro}{Main Theorem}

\setcounter{page}{1}



\newcommand{\cali}{{\mathcal I}}
\newcommand{\calr}{{\mathcal R}}
\newcommand{\call}{{\mathcal L}}
\newcommand{\calm}{{\mathcal M}}


\def\C{{\mathbb{C}}}


\def\a{{\alpha}}
\def\b{{\beta}}

\newcommand{\op}[1]{{\mathcal O}_{\mathbb{P}^{#1}}}
\newcommand{\p}[1]{{\mathbb{P}^{#1}}}

\DeclareMathOperator{\supp}{Supp}
\DeclareMathOperator{\coker}{{coker}}
\DeclareMathOperator{\im}{{im}}

\DeclareMathOperator{\codim}{{codim}}

\title[Instanton sheaves and representations of quivers]
{Instanton sheaves and representations of quivers}

\author[  M. Jardim]{M. Jardim}
\thanks{ }
\dedicatory{}
\address{IMECC - UNICAMP \\ Departamento de Matem\'atica \\
Rua S\'ergio  Buarque de Holanda, 651\\ 13083-970 Campinas-SP, Brazil}
\email{jardim@ime.unicamp.br}

\author[ D. D. Silva  ]{ D. D. Silva }
\thanks{ }
\dedicatory{}
\address{DMA - UFS \\ Avenida Marechal Rondon S/N \\ S\~ao Cristov\~ao-SE, Brazil}
\email{ddsilva@ufs.br}

\keywords{}
\subjclass{}
\date{}

\begin{document}

\begin{abstract}
We study the moduli space of rank 2 instanton sheaves on $\p3$ in terms of representations of a quiver consisting of 3 vertices and 4 arrows between two pairs of vertices. Aiming at an alternative compactification for the moduli space of instanton sheaves, we show that for each rank 2 instanton sheaf, there is a stability parameter $\theta$ for which the corresponding quiver representation is $\theta$-stable (in the sense of King), and that the space of stability parameters has a non trivial wall-and-chamber decomposition. Looking more closely at instantons of low charge, we prove that there are stability parameters with respect to which every representation corresponding to a rank 2 instanton sheaf of charge 2 is stable, and provide a complete description of the wall-and-chamber decomposition for representation corresponding to a rank 2 instanton sheaf of charge 1.
\end{abstract}

\maketitle

\section{Introduction}

Mathematical instanton bundles have been intensely studied by several authors since its introduction in the late 1970s by Atiyah, Drinfeld, Hitchin and Manin \cite{ADHM}. They arose as holomorphic counterparts, via twistor theory, to anti-self-dual connections with finite energy (instantons) on the four-dimensional round sphere, and can be defined as $\mu$-stable vector bundles $E$ on $\p3$ satisfying cohomological vanishing condition $h^1(E(-1))=0$ plus a \emph{reality} condition. A generalization to odd dimensional projective spaces was introduced by Okonek and Spindler in \cite{OS}, while a further generalization to non locally free sheaves on arbitrary projective spaces was considered in \cite{MJ}.   

In this paper, we will focus on rank 2 instantons sheaves on the 3-dimensional projective space. These can be defined as rank 2 torsion free sheaves $E$ on $\p3$ with trivial determinant and satisfying the vanishing conditions
$$h^0(E(-1))=h^1(E(-2))=h^2(E(-2))=h^3(E(-3))=0.$$
Let $n:=c_2(E)$, which is called the \emph{charge} of the instanton sheaf $E$; note that the vanishing conditions imply that $c_3(E)=0$. The moduli space $\cali(n)$ of rank 2 locally free instanton sheaves of charge $c$ is an affine \cite{CO}, irreducible \cite{T1,T2}, nonsingular \cite{JV} quasi-projective variety of the expected dimension $8n-3$. On the other hand, the moduli space $\call(n)$ of all rank 2 instanton sheaves has several irreducible components \cite{JMaiT,JMT2}, possibly of larger than expected dimension.

One can show that every rank 2 instanton sheaf is stable \cite[Theorem 4]{JMT2}, so the moduli spaces $\cali(n)$ and $\call(n)$ can be regarded as open subsets of the Gieseker--Maruyama moduli space $\calm(n)$ of rank 2 semistable sheaves with Chern classes $(c_1,c_2,c_3)=(0,n,0)$. An interesting problem, addressed in \cite{JMT1,MaTr2,NT,Per3} is to understand the closures $\overline{\cali(n)}$ and $\overline{\call(n)}$ of $\cali(n)$ and $\call(n)$ within the projective variety $\calm(n)$, and one remarkable fact is that both do contain locally free and non locally free sheaves which are not instanton when $n\ge2$. 

The key point of this paper is to present an alternative compactification of $\cali(n)$ and $\call(n)$ in terms of representations of quivers. Indeed, every instanton sheaf can be regarded as a representation of the following quiver
\begin{equation} \label{Q}
\mathbf{Q} =: \Biggl\{ \xymatrix{
\underset{-1}{\bullet} \ar@< 3pt> [r]^{\eta_0} \ar@<-9pt> [r]^\vdots_{\eta_{3}}	 &
\underset{0}{\bullet}  \ar@< 3pt> [r]^{\phi_0} \ar  @<-9pt> [r]^\vdots_{\phi_{3}}	 &
\underset{1}{\bullet} }\Biggr\}
\end{equation}
satisfying the relations $\phi_j\eta_i+\phi_i\eta_j=0$, with $0\le i,j \le 3$, plus additional open conditions, see details in Section \ref{prelim} below. One can then consider the projective moduli space of $\theta$-semistable representations of $\mathbf{Q}$ as constructed by King \cite{K}.

In this context, King's $\theta$-stability for representations of the quiver \eqref{Q} depends on two real parameters, and we obtain a wall-and-chamber decomposition of the real plane of stability parameters. One can then study where the representations corresponding to instanton sheaves are $\theta$-stable with respect to different stability parameters, and consider the compactification of $\cali(n)$ and $\call(n)$ within the projective moduli space of $\theta$-semistable representations of $\mathbf{Q}$.

The goal of this paper is to give the first steps in this program, providing a full picture in the simplest case, of charge 1 instanton sheaves.

More precisely, we prove that the moduli space of $\theta$-stable representations of $\mathbf{Q}$ with dimension vector $(n,2+2n,n)$ and $\theta=(\alpha,-n(\alpha+\gamma)/(2n+2),\gamma)$, henceforth denoted by $\calr_{\theta}(n)$, is always empty away from the fourth quadrant in the $\alpha\gamma$-plane. Next, we show that for each instanton sheaf $E$ there are stability parameters $\alpha$ and $\gamma$ for which the representation of $\mathbf{Q}$ corresponding to $E$ is $\theta$-stable. In addition, the line $\alpha+\gamma=0$ is a wall that destabilizes every instanton representation corresponding to a non locally free instanton sheaf.

Furthermore, when $n=1,2$, we show that there are stability parameters $\alpha$ and $\gamma$ for which {\it every} instanton representation of $\mathbf{Q}$ is $\theta$-stable. Finally, we establish the following result, providing a full picture for the case $n=1$.

\begin{thm-intro}
Let $\calr_{\theta}(1)$ be the moduli space of semistable representations of vector dimension $(1,4,1)$ with $\theta=(\alpha,-(\alpha+\gamma)/4,\gamma)$. If $(\alpha,\gamma)$ is a value outside the fourth quadrant of the $\alpha\gamma$-plane then $\calr_{\theta}(1)$ is empty. Otherwise, the moduli space $\calr_{\theta}(1)$ is isomorphic to $\mathbb{P}^5$,  containing $\cali(1)$ as the complement of an irreducible quadric. The points of this quadric are the representations corresponding to non locally free instanton sheaves when $\gamma < -\alpha$, and to the perverse instanton sheaves dual to the non locally free instanton sheaves when $\gamma > -\alpha$.
\end{thm-intro}

\begin{figure}[h!]
\includegraphics[scale=0.7]{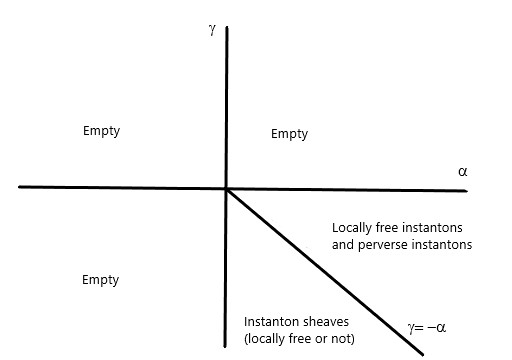}
\caption{The graph illustrates the Main Theorem, describing the points in the quiver moduli space $\calr_{\theta}(1)$ in each of the five regions of the $\alpha\gamma$-plane.}
\label{fig1}
\end{figure}

It is worth observing that an analogous picture was found in \cite{JMac} when considering instanton sheaves as objects in the derived category $D^b(\p3)$ of coherent sheaves on $\p3$. Indeed, one can find a point in the space of Brigdeland stability conditions on $\p3$ and an open neighbourhood of this point which is divided into 3 stability chambers whose moduli space of stable objects with Chern character $(2,0,-1,0)$ are exactly as described above: one chamber in which the moduli space is empty, one chamber in which the moduli space coincides with the moduli space of instanton sheaves of charge 1, and one chamber in which the moduli space consists of the locally free sheaves of charge 1 plus the dual of non locally free instanton sheaves of charge 1. Furthermore, it was also shown in \cite{JMac} that there is a Bridgeland stability condition in $D^b(\p3)$ with respect to which every instanton sheaf if stable.

This paper is organized as follows. We start by setting up notations and revising some key fact about instanton sheaves and representations of quivers in Section \ref{prelim}. We then prove the results for instanton representations of arbitrary charge mentioned above in Section \ref{instantons}. Finally Section \ref{ss:Sing} is dedicated to describing $\theta$-stable representations with the dimension vector of a representation corresponding to an instanton sheaf of charge 1 (see Theorem \ref{teorema}), later showing that there exist only one wall for this dimension vector in Section \ref{sec:modspc}, thus completing the proof of the Main Theorem. 

\subsection*{Acknowledgments}
The first named author is supported by the CNPQ grant number 302889/2018-3 and the FAPESP Thematic Project 2018/21391-1.
The second named author would like to thank IMECC-UNICAMP, the host institution of his postdoctoral position, and his own institution DMA-UFS for providing the necessary means for the research presented in this paper to be done. This work was also partially funded by CAPES - Finance Code 001.


\section{Preliminaries}\label{prelim}

We begin by setting up the notation and nomenclature to be used in the rest of the paper.

\subsection{Instanton sheaves}

\begin{Def}
An instanton sheaf on $\mathbb{P}^3$ is a torsion free sheaf $E$ on $\mathbb{P}^3$ with $c_1(E)=c_3(E)=0$ and satisfying 
$$h^0(E(-1))=h^1(E(-2))=h^2(E(-2))=h^3(E(-3))=0.$$
The charge of $E$ is given by its second Chern class $c_2(E)$.
\end{Def}

The definition above was originally proposed in \cite{MJ} in a broader context. In the present paper we only consider rank 2 instanton sheaves on $\mathbb{P}^3$.

Instanton sheaves are closed related to the concept of a linear monad by the use of the Beilinson spectral sequence. Recall that a {\it linear monad} on $\mathbb{P}^{3}$ is a complex of locally free sheaves of the form
\begin{equation} \label{linear monad}
\mathcal{O}_{\mathbb{P}^3}(-1)^{\oplus a} \stackrel{\alpha}{\rightarrow} \mathcal{O}_{\mathbb{P}^3}^{\oplus b} \stackrel{\beta}{\rightarrow} \mathcal{O}_{\mathbb{P}^3}(1)^{\oplus c}
\end{equation}
such that $\alpha$ is injective and $\beta$ is surjective. The sheaf $E:=\ker\beta/\im\alpha$ is called the cohomology of the monad. Consider the variety $\Sigma:=\supp(\coker\alpha^{*})$, which is called the {\it degeneration locus} of the monad. One can show that, see \cite[Proposition 4]{MJ}:
\begin{itemize}
\item [(i)] $E$ is torsion free if and only if $\codim\Sigma\ge2$;
\item [(ii)] $E$ is reflexive if and only if $\codim\Sigma=3$;
\item [(iii)] $E$ is locally free if and only if $\Sigma=\emptyset$.
\end{itemize}
Note that ${\rm rank}(E)=b-a-c$, $c_1(E)=c-a$, and $c_2(E)=(c+a+(c-a)^2)/2$.

A torsion free sheaf $E$ on $\mathbb{P}^3$ is said to be a {\it linear sheaf} if it can be represented as the cohomology of a linear monad. It can be proved that instanton sheaves on $\mathbb{P}^3$ are exactly the linear sheaves for which $c_1(E)=0$, that is, an instanton sheaf can be uniquely represented as the cohomology of a linear monad as in display \eqref{linear monad} for which $a=c$, see \cite[Proposition 2 and Thoerem 3]{MJ}. Therefore, rank 2 instanton sheaves of charge $n$ are in 1-1 correspondence with linear monads of the form
\begin{equation} \label{inst monad}
\mathcal{O}_{\mathbb{P}^3}(-1)^{\oplus n} \stackrel{\alpha}{\rightarrow} \mathcal{O}_{\mathbb{P}^3}^{\oplus 2n+2} \stackrel{\beta}{\rightarrow} \mathcal{O}_{\mathbb{P}^3}(1)^{\oplus n}
\end{equation}
whose degeneration locus has codimension at least 2.

It will also be important for us to consider the following more general objects, which were first introduced in \cite[Section 3.2]{HL}; see \cite[Definition 5.6]{HJV} for an alternative definition. Below, $\mathcal{H}^p$ denotes the ${\rm p}^{\rm th}$-cohomology sheaf of an object in $D^{b}(\p3)$, while $\mathbb{H}^p$ denote its ${\rm p}^{\rm th}$-hypercohomology group.

\begin{Def}
A {\it perverse instanton sheaf} on $\mathbb{P}^3$ is an object $C_{\bullet}$ in $D^{b}(\p3)$ with $c_1(C_{\bullet})=0$ satisfying the following conditions:
\begin{enumerate}
\item $\mathcal{H}^p(C_{\bullet})=0$ for $p\ne0,1$;
\item $\mathbb{H}^p(C_{\bullet}\otimes\op3(q))=0$ if $p+q<0$ when $p=0,1$ and $p+q\ge0$ when $p=2,3$;
\item the left derived functor $Lj^*C^{\bullet}$ is a sheaf object where $j:l \hookrightarrow \p3$ is the inclusion of a line $l$ in $\mathbb{P}^3$.
\end{enumerate}
\end{Def}

Note that every instanton sheaf is a perverse instanton as a sheaf object in $D^{\rm b}(\p3)$. In addition, it follows from the considerations in \cite[Section 2]{GJ} that the derived dual of a rank 2 instanton sheaf is also a perverse instanton sheaf. However, there are rank 2 perverse instanton sheaves which are not dual to a sheaf. 

One can show that $\mathcal{H}^0(C_{\bullet})$ is a torsion free sheaf and $\dim\mathcal{H}^1(C_{\bullet})=1$, see \cite[Corollary 3.16]{HL}. The rank of $C_{\bullet}$ is defined to be the rank of $\mathcal{H}^0(C_{\bullet})$; the charge of $C_{\bullet}$ is defined to be the second Chern class of $C_{\bullet}$, which coincides with $c_2(\mathcal{H}^0(C_{\bullet}))+{\rm mult}(\mathcal{H}^1(C_{\bullet}))$.

If $\mathcal{H}^0(C_{\bullet})=0$, then the sheaf $\mathcal{H}^1(C_{\bullet})$ is called a {\it rank 0 instanton sheaf}, see \cite{GJ,HL,JMaiT} for further details on such sheaves. 

Furthermore, observe that every complex of sheaves like the one in display \eqref{inst monad} is a rank 2 perverse instanton sheaf when regarded as an object in $D^{b}(\p3)$, provided $\codim\supp(\coker\alpha^{*})$ and $\codim\supp(\coker\beta)$ are both at least 2. Conversely, every rank 2 perverse instanton sheaf is canonically isomorphic (in $D^{b}(\p3)$) to a complex of sheaves as in display \eqref{inst monad} satisfying the latter property, see \cite[Lemma 3.15]{HL}.


\subsection{Representation of quivers}

Recall that a {\it quiver} Q is given by a finite set of vertices $Q_0$, a finite set of arrows $Q_1$ and two maps $h,t:Q_1 \rightarrow Q_0$ called head and tail, respectively. A {\it linear representation} of a quiver is given by $R=(\{V_i\}_{i \in Q_0}; \{f_{\alpha}\}_{\alpha \in Q_1})$ where $V_i$ is a $\mathbb{C}$-vector space and $f_{\alpha}:V_{t(\alpha)}\rightarrow V_{h(\alpha)}$ is linear. A morphism between two representations $R$ and $R'$ is given by $\phi=\{ \phi_{i} \}_{i \in Q_{0}}$ where $\phi_i: V_i \rightarrow V'_i$ is linear and for each arrow $\alpha$ we have $f'_{\alpha} \phi_{t(\alpha)}= \phi_{h(\alpha)}f_{\alpha}$. We denote ${\rm Rep}_{\mathbb{C}}Q$ the abelian category of the linear representations of the quiver $Q$.

The {\it algebra of the linear quiver} $Q$ is the associative $\mathbb{C}$-algebra $\mathbb{C}Q$ determined by generators $e_i$, where $i \in Q_0$, and $\alpha$, where $\alpha \in Q_1$ and the relations:

$e_ie_j=0$ if $i \neq j$, $e_i^2=e_i$, $e_{t(\alpha)}\alpha=\alpha e_{h(\alpha)}=\alpha$.

From the relations above, for any arrows $\alpha,\beta$ we get $\alpha \beta=0$ unless $h(\alpha)=t(\beta)$. Thus a product of arrows $\alpha_l \cdots \alpha_1$ is zero unless the sequence $\pi = (\alpha_1, \cdots, \alpha_l)$ is a {\it path}, i.e., $h(\alpha_j)=t(\alpha_{j+1})$ for $j=1, \cdots, l-1$. We then put $s(\pi)=s(\alpha_1)$, $t(\pi)=t(\alpha_l)$ and the ${\it length}$ of the path $\pi$, $l(\pi)=l$. For any vertex $i$ we also view $e_i$ as the {\it path of length} $0$ at the vertex $i$. 

Clearly the paths generate the vector space $\mathbb{C}Q$. They also are linearly independent: consider indeed the {\it path algebra} with basis the set of all paths and multiplication given by concatenation of paths. From the concept of a path algebra we get the following definition of quiver with relations generalizing the former definition of quiver:

\begin{Def}
A relation on a quiver $Q$ is a linear combination of paths in $\mathbb{C}Q$ having a common source and a common target and of length at least 2. A quiver with relations is a pair $(Q,I)$ where $Q$ is a quiver and $I$ is a two-sided ideal of $\mathbb{C}Q$ generated by relations. The quotient algebra $\frac{\mathbb{C}Q}{I}$ is the path algebra of $(Q,I)$.
\end{Def}

In this paper, we shall be interested in the quiver given in (1): 
\begin{center}
$ \mathbf{Q}:=$
\begin{tikzcd}
\stackrel{-1}{\circ} \arrow[r, "\eta_2" description , shift right=2]
  \arrow[r,"\eta_3" description, shift right=6]  
  \arrow[r, "\eta_1" description, shift left=2]
  \arrow[r, "\eta_0" description,  shift left=6]
& \stackrel{0}{\circ} \arrow[r, "\phi_2" description , shift right=2]
  \arrow[r,"\phi_3" description, shift right=6]  
  \arrow[r, "\phi_1" description, shift left=2]
  \arrow[r, "\phi_0" description,  shift left=6]
& \stackrel{1}{\circ} 
\end{tikzcd}
\end{center}
with relations $P_{ij}:=\phi_i\eta_j+\phi_j\eta_i=0$ for $0 \leq i \leq j \leq 3$.

A representation $R=(V_{-1},V_0,V_1;\{f_{\eta_i}\}, \{g_{\phi_i}\})$ of $\mathbf{Q}$ is said to satisfy the relations $P_{ij}$ when $g_{\phi_i}f_{\eta_j}+g_{\phi_j}f_{\eta_i}=0$.

\begin{Def}
Let $R=(V_{-1}, V_0, V_1, \{f_{\eta_i}\}, \{g_{\phi_j}\})$ be a representation of the quiver $\mathbf{Q}$ with relations $P_{ij}$. 
\begin{enumerate}
\item $R$ is {\it globally (locally) injective} if for every $(\lambda_0, \lambda_1, \lambda_2, \lambda_3) \in \mathbb{C}^4\setminus\{0\}$ (away from a subset of codimension at most 2), $\sum \lambda_i f_{\eta_i}$ is injective.
\item $R$ is {\it globally (locally) surjective} if for every $(\lambda_0, \lambda_1, \lambda_2, \lambda_3) \in \mathbb{C}^4\setminus\{0\}$ (away from a subset of codimension at most 2), $\sum \lambda_i g_{\eta_i}$ is surjective.
\item $R$ is an {\it instanton representation} if it is locally injective, globally surjective, and $\dim R=(n,2n+2,n)$ for some $n\ge0$, called the {\it charge} of $R$.
\item $R$ is a {\it perverse representation} if it is locally injective, locally surjective, and $\dim R=(n,2n+2,n)$ for some $n\ge0$, also called the {\it charge} of $R$.
\end{enumerate}
\end{Def}

We will make use of the following elementary facts:
\begin{enumerate}
\item If a representation $R$ with dimension vector $(a,b,c)$ is locally injective, then $b\ge a+1$;
\item If a representation $R$ with dimension vector $(a,b,c)$ is globally injective, then $b\ge c+3$;
\item every subrepresentation of a locally (globally) injective representation is also locally (globally) injective;
\item every quotient of a (locally) globally surjective representation is also (locally) globally surjective.
\end{enumerate}

\begin{Ex}
It is clear that a representation $R$ with $\dim R=(1,4,1)$ is globally injective if, and only if, $\{f_{\eta_0},f_{\eta_1},f_{\eta_2},f_{\eta_3}\}$ is a basis of ${\rm Hom}(\mathbb{C},\mathbb{C}^4)=\mathbb{C}^4$, while $R$ is globally surjective if, and only if, $\{g_{\phi_0},g_{\phi_1},g_{\phi_2},g_{\phi_3}\}$ is a basis of ${\rm Hom}(\mathbb{C}^4,\mathbb{C})=\mathbb{C}^4$. \qed
\end{Ex}


\subsection{Equivalence between categories of monads and representations}\label{subsec:eqv}

Let $\mathfrak{C}$ be the category of complexes of the form \eqref{linear monad}, regarded as a full subcategory of the category of complexes of sheaves on $\p3$. We shall also denote by $\mathfrak{Q}$ the abelian category of representations of $Q$ satisfying the relations $P_{ij}$.



\begin{Prop}\label{functor}
There is an equivalence of categories between between $\mathfrak{C}$ and $\mathfrak{Q}$. Moreover, under this equivalence:
\begin{enumerate}
\item instanton sheaves are in 1-1 correspondence with instanton representations of $\mathbf{Q}$;
\item perverse instanton sheaves which are dual to the instanton sheaves of the first item are in 1-1 correspondence with perverse representations of $\mathbf{Q}$;
\item locally free instanton sheaves are in 1-1 correspondence with instanton representations of $\mathbf{Q}$ that are globally injective.
\end{enumerate}
\end{Prop}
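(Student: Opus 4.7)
The plan is to construct an explicit equivalence $F:\mathfrak{C}\to\mathfrak{Q}$ using the canonical identification $\Hom(\op3(k),\op3(k+1))\cong H^0(\op3(1))=\mathbb{C}\langle x_0,x_1,x_2,x_3\rangle$. Given a complex $\op3(-1)^{\oplus a}\stackrel{\alpha}{\to}\op3^{\oplus b}\stackrel{\beta}{\to}\op3(1)^{\oplus c}$ in $\mathfrak{C}$, the maps $\alpha$ and $\beta$ are matrices of linear forms, which decompose uniquely as $\alpha=\sum_{i=0}^{3}x_i\eta_i$ and $\beta=\sum_{j=0}^{3}x_j\phi_j$ for scalar matrices $\eta_i\in\Hom(\mathbb{C}^a,\mathbb{C}^b)$ and $\phi_j\in\Hom(\mathbb{C}^b,\mathbb{C}^c)$. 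Define $F$ to send the complex to the representation $(\mathbb{C}^a,\mathbb{C}^b,\mathbb{C}^c;\{\eta_i\},\{\phi_j\})$. Expanding $\beta\alpha=0$ in the basis $\{x_ix_j\}_{i\le j}$ of $H^0(\op3(2))$ produces exactly the relations $\phi_i\eta_j+\phi_j\eta_i=0$, so $F$ lands in $\mathfrak{Q}$, and the reverse construction gives an inverse functor $G$. Full faithfulness then follows because morphisms in both categories are triples of scalar matrices (using $\Hom(\op3(k)^m,\op3(k)^{m'})=\Hom(\mathbb{C}^m,\mathbb{C}^{m'})$), and the intertwining conditions $f_0\alpha=\alpha'f_{-1}$, $f_1\beta=\beta'f_0$, after separating coefficients of each $x_i$, reduce exactly to the conditions $f_0\eta_i=\eta'_if_{-1}$ and $f_1\phi_j=\phi'_jf_0$ defining morphisms in $\mathfrak{Q}$.

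For the correspondences (1)--(3), I would translate the sheaf-theoretic conditions using the dictionary $\Sigma:=\supp(\coker\alpha^{*})=\{[\lambda]\in\p3:\sum_i\lambda_i\eta_i\text{ not injective}\}$. Then ``locally injective'' corresponds to $\codim\Sigma\ge 2$ (torsion-freeness, by \cite[Proposition 4]{MJ}) and ``globally injective'' to $\Sigma=\emptyset$ (locally free); symmetrically, $\beta$ is surjective as a sheaf map iff the representation is ``globally surjective'', and $\coker\beta$ has support of codimension $\ge 2$ iff the representation is ``locally surjective''. Item (1) then reduces to the characterization of rank 2 instanton sheaves as torsion-free cohomologies of monads of the form \eqref{inst monad} from \cite[Proposition 2, Theorem 3]{MJ}; item (3) adds the constraint $\Sigma=\emptyset$; and item (2) follows from \cite[Lemma 3.15]{HL}, which identifies rank 2 perverse instanton sheaves with complexes of form \eqref{inst monad} whose $\coker\alpha^*$ and $\coker\beta$ are both supported in codimension $\ge 2$, matching the perverse-representation condition exactly.

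The main obstacle I anticipate is item (2), where one must match ``perverse instanton sheaves dual to the instanton sheaves of the first item'' with perverse representations. Under $F$, the derived dual $\mathcal{R}\Hom(-,\op3)$ on $\mathfrak{C}$ corresponds to a representation-level duality sending $R=(V_{-1},V_0,V_1;\eta,\phi)$ to $R^{\vee}=(V_1^*,V_0^*,V_{-1}^*;\phi^*,\eta^*)$, and this duality interchanges the injectivity and surjectivity conditions on the $\eta$ and $\phi$ sides. Verifying that the derived duals of ordinary instanton sheaves correspond precisely to perverse representations requires careful attention to the fact that the derived dual of a locally free instanton is again a locally free instanton, while the derived dual of a non-locally-free instanton is genuinely perverse; reconciling this with the canonical monad presentation from \cite[Lemma 3.15]{HL} is the crux of item (2).
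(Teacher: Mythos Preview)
Your approach is essentially identical to the paper's: the same construction of $F$ by expanding $\alpha$ and $\beta$ in the basis $x_0,\dots,x_3$, the same verification that $\beta\alpha=0$ is equivalent to the relations $P_{ij}$, the same full-faithfulness argument via $\Hom(\op3(k)^m,\op3(k)^{m'})\cong\Hom(\mathbb{C}^m,\mathbb{C}^{m'})$, and the same dictionary for items (1) and (3) (indeed your formulation via $\codim\Sigma\ge2$ is sharper than the paper's sentence ``$F(C_\bullet)$ is locally injective iff $\alpha$ is injective''). Your discussion of item (2) actually goes beyond the paper's proof, which does not address that item explicitly; your caution there is warranted, since under the representation-level duality the dual of an instanton representation is a perverse representation that is \emph{globally} (not merely locally) injective, so duals of instantons form a proper subclass of perverse representations and item (2) must be read with this in mind.
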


\begin{proof}
We construct an equivalence functor $\mathbf{F}$ between $\mathfrak{C}$ and $\mathfrak{Q}$ which restricts to the desired equivalences between their subcategories. Similar partial results in this direction were obtained in \cite{MJDP} and \cite{JV}.

First, fix homogeneous coordinates $[x_0:x_1:x_2:x_3]$ of $\p3$, and let $\{x_0,x_1,x_2,x_3\}$ be the corresponding basis of $H^0(\op3(1))$; one has a natural isomorphism
$$ {\rm Hom}(\op3(-1)^{\oplus a}, \op3^{\oplus b}) \simeq M_{b \times a}\otimes_{\mathbb{C}}H^0(\op3(1)) , $$
where $M_{b \times a}$ denotes the vector space of $b \times a$ matrices of complex numbers.

Consider the complex
$$ C_{\bullet}:\mathcal{O}_{\mathbb{P}^3}(-1)^{\oplus a} \stackrel{\alpha}{\rightarrow} \mathcal{O}_{\mathbb{P}^3}^{\oplus b} \stackrel{\beta}{\rightarrow} \mathcal{O}_{\mathbb{P}^3}(1)^{\oplus c} . $$
As $\alpha$ and $\beta$ can be seen as matrices whose entries are linear forms on $x_0,x_1,x_2,x_3$ we have
$$ \alpha= \alpha_0x_0 + \cdots + \alpha_3x_3 ~~,~~
\beta= \beta_0x_0 + \cdots + \beta_3x_3 ~~ , $$
where $\alpha_i \in M_{b \times a}$ and $\beta \in M_{c \times b}$. Hence we can set
$$F(C_{\bullet})=(\mathbb{C}^{a},\mathbb{C}^{b},\mathbb{C}^{c},\{\alpha_i\}, \{\beta_j\}).$$
Further, we have
$$\beta \circ \alpha=0 \Longleftrightarrow \sum_{i \leq j}(\beta_i\alpha_j+\beta_j\alpha_i)x_ix_j=0.$$
It follows that
$$\beta \circ \alpha=0 \Leftrightarrow \beta_i\alpha_j+\beta_j\alpha_i=0, 0 \leq i \leq j \leq 3.$$
Therefore, $\mathbf{F}(C_{\bullet})$ satisfies the relations of $\mathbf{Q}$. 

Given a morphism $\phi_{\bullet}:C_{\bullet} \rightarrow N_{\bullet}$ between complexes, by using the canonical isomorphism
${\rm Hom}(\op3(i)^{\oplus r},\op3(i)^{\oplus s}) \simeq M_{r \times s}$ where $i \in {\mathbb Z}$, we set $\mathbf{F}(\phi_{\bullet})$ to be the morphism of representations obtained from the above isomorphism.

Finally, the functor $\mathbf{F}$ is dense: given a representation in $\mathfrak{Q}$ and a choice of homogeneous coordinates for $\mathbb{P}^3$ one easily constructs a complex of the form \eqref{linear monad}. The functor is also faithfull and full since 
$${\rm Hom}_{\mathfrak{C}}(C_{\bullet}, D_{\bullet}) \stackrel{F}{\rightarrow}{\rm Hom}_{\mathfrak{Q}}(F(C_{\bullet}),F(D_{\bullet}))$$
is clearly an isomorphism.

For the second claim, just note that $\mathbf{F}(C_{\bullet})$ is locally injective if and only if the morphism $\alpha$ is injective, while $\mathbf{F}(C_{\bullet})$ is globally surjective if and only if the morphism $\beta$ is surjective. In addition, the degeneration locus of $C_{\bullet}$ is empty if and only if  $\mathbf{F}(C_{\bullet})$ is globally injective.
\end{proof}

To complete this section, recall that a representation $R$ of a quiver is said to be Schurian if every endomorphism is a multiple of the identity, that is ${\rm Hom}(R,R)\simeq\C$. Since every rank 2 instanton sheaf $E$ is simple (see \cite[Lemma 23]{MJ}), and the endomorphisms of $E$ bijective with the endomorphism of the corresponding monads \cite{OSS}, it follows from Proposition \ref{functor} that every instanton representation is Schurian. 



\subsection{Stability of representations}\label{subsec:stab}

Following King in \cite{K}, we consider the moduli space of representations of the quiver with relations $\mathbf{Q}$ of fixed dimension vector $(n,2n+2,n)$. Our notation and convention for the definition of semistability come from \cite{kirillov} though.  

Recall that for a quiver $\mathbf{Q}$ and a dimension vector $\bf{v} \in \mathbb{Z}^{I}_{+}$ where $I$ is the number of vertices of $\mathbf{Q}$ we define the representation space $R(\bf{v})=\bigoplus{\rm Hom}(\mathbb{C}^{\bf{v}_i}, \mathbb{C}^{\bf{v}_j})$ and the group ${\rm GL}(\bf{v})=\prod_{i \in \mathbf{Q_0}} {\rm GL}(\bf{v}_i)$ acting on it by conjugation. Since the group of constants acts trivially, we have an action of the group ${\rm PGL}(\bf{v})$ on the representation space. 
For the moduli space of representations we shall consider the twisted GIT quotient. Let $\theta \in \mathbb{Z}^{I}$ be a stability parameter and consider the character
$$\chi_{\theta}: {\rm GL}(\bf{v}) \rightarrow \mathbb{C}^{\times}$$

which sends $g$ to $\prod {\rm det}(g_i)^{-\theta_i}$. For the character to be well defined on $G={\rm PGL}(\bf{v})$, we must have

$$\theta \cdot {\bf v}= \sum_{i \in {\bf Q}_0} \theta_i \cdot {\bf v}_i=0.  $$

In this case, we define $\mathbb{C}[\bf{v}]^{G, \chi_{\theta}}=\{f \in \mathbb{C}[R(\bf{v})]:f(g \cdot m)= \chi_{\theta}(g)f(m)\}$, where $\mathbb{C}[R(\bf{v})]$ is the $\mathbb{C}$-algebra of regular functions on $R(\bf{v})$. Finally the GIT quotient associated to the stability parameter $\theta$ and to the dimension vector $\bf{v}$ is the variety:

$$\calr_{\theta}(\bf{v})=R(\bf{v})//_{\chi_{\theta}}G={\rm Proj}\left(\oplus_{n \geq 0} \mathbb{C}[\bf{v}]^{G,\chi_{\theta}^n} \right)$$

Now let $\theta \in {\mathbb R}^I$. A representation $V$ of $\mathbf{Q}$ is called $\theta$-semistable (respectively, $\theta$-stable) if $\theta \cdot {\bf dim}V=0$ and for any subrepresentation $V' \subset V$ we have $\theta \cdot {\bf dim}V' \leq 0$ (respectively, for every nonzero proper subrepresentation $V'$ we have $\theta \cdot {\bf dim}V' < 0$).

It was proved in \cite{K} that the GIT  $\chi_{\theta}$-semistable (respectively, $\chi_{\theta}$-stable) representations correspond to the $\theta$-semistable (respectively, $\theta$-stable) representations so we get the usual description of the moduli space $\calr_{\theta}(\bf{v})$ by means of $\theta$-semistable representations. 

In this paper we are interested in case ${\bf v}=(n,2n+2,n)$. We will set 
$$ \theta=\left( \alpha,-(\alpha+\gamma)\dfrac{n}{2n+2},\gamma \right), $$ 
so that $\theta\cdot(n,2n+2,n)=0$. From now on, we will denote by $\calr_{\theta}(n)$ the moduli space of $\theta$-semistable representations of $\mathbf{Q}$ with dimension vector $(n,2n+2,n)$ for $\theta$ as above.

A {\it stability chamber} is a subset $\Gamma$ of the $\alpha\gamma$-plane such that $\calr_{\theta_1}(n)=\calr_{\theta_2}(n)$ (as sets) for every $\theta_1,\theta_2\in\Gamma$. Each irreducible component of the complement of the union of all stability chambers is called a {\it wall}. Since $\theta$-stability is invariant under multiplication by a scalar (that is $\calr_{\theta}(n)=\calr_{\lambda\cdot\theta}(n)$ for every $\theta$ and every $\lambda\in\C^*$), it is easy to see that walls are lines passing through the origin of the $\alpha\gamma$-plane, while chambers are the unbounded regions limited by two such lines.


\section{Stability of instantons representations} \label{instantons}

Every representation $R$ of the quiver $\mathbf{Q}$ with $\dim R=(a,b,c)$ can be expressed as an extension of two other representations as follows
\begin{equation} \label{kernel}
0 \to K \to R \to A^{\oplus a} \to 0 ,
\end{equation}
where $\dim K=(0,b,c)$, and $A$ is the simple representation associated with the first vertex. With this in mind, $K$ is called the \emph{kernel subrepresentation} of $R$. Similarly, one also has a short exact sequence of the form
\begin{equation} \label{cokernel}
0 \to C^{\oplus c} \to R \to Q \to 0 ,
\end{equation}
where $\dim K=(a,b,0)$, and $C$ is the simple representation associated with the third vertex; $Q$ is called the \emph{cokernel quotient} of $R$. These previous 2 sequences correspond, under the functor $\mathbf{F}$ described in the proof of Proposition \ref{functor}, to the following short exact sequences of complexes:
$$ \xymatrix{
0\ar[d]  & 0\ar[d]  & 0\ar[d]  & ~~ & 
0\ar[d]  & 0\ar[d]  & 0\ar[d] \\
0\ar[r]\ar[d]  & \op3^{\oplus b}\ar[r]^\beta\ar[d]  & \op3(1)^{\oplus c}\ar[d]  & ~~ & 
0\ar[r]\ar[d]  & 0\ar[r]\ar[d]  & \op3(1)^{\oplus c}\ar[d] \\
\op3(-1)^{\oplus a} \ar[r]^\alpha\ar[d]  & \op3^{\oplus b}\ar[r]^\beta\ar[d]  & \op3(1)^{\oplus c}\ar[d]  & ~~ & 
\op3(-1)^{\oplus a}\ar[r]^\alpha\ar[d]  & \op3^{\oplus b}\ar[r]^\beta\ar[d]  & \op3(1)^{\oplus c}\ar[d] \\
\op3(-1)^{\oplus a}\ar[r]\ar[d]  & 0\ar[d]\ar[r]  & 0\ar[d]  & ~~ & 
\op3(-1)^{\oplus a}\ar[r]^\alpha\ar[d]  & \op3^{\oplus b}\ar[r]\ar[d]  & 0\ar[d] \\
0  & 0  & 0  & ~~ & 
0  & 0  & 0
}
$$

\begin{Lemma}\label{empty}
The moduli space $\calr_{\theta}(n)$ is empty whenever $(\alpha,\gamma)$ lies outside the fourth quadrant of the $\alpha\gamma$-plane.
\end{Lemma}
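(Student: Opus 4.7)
The plan is to exhibit two subrepresentations that are present in \emph{every} representation $R$ of dimension vector $(n,2n+2,n)$, and then observe that the values of $\theta\cdot\dim$ they produce are incompatible with $\theta$-semistability when $(\alpha,\gamma)$ lies outside the closed fourth quadrant.

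First, I would use the canonical exact sequence \eqref{kernel}. The kernel subrepresentation $K\subset R$ has $\dim K=(0,2n+2,n)$ and exists for any $R$, since setting $V'_{-1}=0,\ V'_0=V_0,\ V'_1=V_1$ makes the compatibility with the $\eta_i$ vacuous and the compatibility with the $\phi_j$ automatic. Substituting $\theta=(\alpha,-n(\alpha+\gamma)/(2n+2),\gamma)$ and $\dim K=(0,2n+2,n)$ yields
$$\theta\cdot\dim K=(2n+2)\!\left(-\tfrac{n(\alpha+\gamma)}{2n+2}\right)+n\gamma=-n\alpha,$$
so $\theta$-semistability of $R$ forces $\alpha\geq 0$.

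Next, I would exhibit the subrepresentation $S\subset R$ supported at the third vertex, with $\dim S=(0,0,n)$, obtained by taking $V'_1=V_1$ and $V'_{-1}=V'_0=0$. This is a subrepresentation because there are no outgoing arrows from $V_1$, so there is nothing to check. A direct computation gives
$$\theta\cdot\dim S=n\gamma,$$
so $\theta$-semistability forces $\gamma\leq 0$. Combining the two inequalities, every point of $\calr_\theta(n)$ requires $(\alpha,\gamma)$ to lie in the closed fourth quadrant, hence $\calr_\theta(n)=\emptyset$ whenever $\alpha<0$ or $\gamma>0$.

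I do not anticipate any real obstacle: both distinguished subrepresentations are manifest from the structure of $\mathbf{Q}$, and the slope computation is a one-line substitution. The only minor subtlety is the convention that ``outside the fourth quadrant'' is read as the complement of the closed fourth quadrant $\{\alpha\geq 0,\ \gamma\leq 0\}$, which is consistent with what semistability actually gives us here.
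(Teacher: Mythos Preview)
Your proof is correct and essentially identical to the paper's: the paper uses the same two distinguished subrepresentations, namely $K$ from the kernel sequence \eqref{kernel} (phrased there via the quotient $A^{\oplus n}$, which is equivalent since $\theta\cdot\dim K=-\theta\cdot\dim A^{\oplus n}$) and $C^{\oplus n}$ from the cokernel sequence \eqref{cokernel}, which is exactly your $S$. The computations and conclusions match.
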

\begin{proof}
If $\alpha<0$, then $\theta\cdot\dim A^{\oplus n}=n\alpha<0$, so \eqref{kernel} is a destabilizing sequence for $R$. Similarly, if $\gamma>0$, then $\theta\cdot C^{\oplus n}=n\gamma>0$,  so \eqref{cokernel} is a destabilizing sequence for $R$.
\end{proof}

Next, we argue that there is a stability parameter $\theta$ for which the moduli space $\calr_{\theta}(n)$ is non-empty and contains (at least some) instanton sheaves, that is, $\call(n) \cap \calr_{\theta}(n) \neq \emptyset$.

\begin{Prop}
Let $R$ be an instanton representation. Then there exists a stability parameter $\theta$ for which $R$ is $\theta$-stable.
\end{Prop}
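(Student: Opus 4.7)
The plan is to exhibit a point $(\alpha,\gamma)$ in the open fourth quadrant of the $\alpha\gamma$-plane at which every proper nonzero subrepresentation $R'\subset R$ pairs strictly negatively with $\theta=(\alpha,-(\alpha+\gamma)n/(2n+2),\gamma)$. For a subrep of dimension $(a,b,c)$, the pairing unfolds to
$$\theta\cdot(a,b,c) \;=\; \alpha\!\left(a-\tfrac{nb}{2n+2}\right)+\gamma\!\left(c-\tfrac{nb}{2n+2}\right),$$
so each proper nonzero subrep determines a linear functional and the stability condition amounts to a finite family of open half-plane constraints (finitely many because subrep dimension vectors are componentwise bounded by $\dim R=(n,2n+2,n)$).

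The first step is to rule out the degenerate case in which some proper nonzero subrep has $(p,q):=(a-\tfrac{nb}{2n+2},c-\tfrac{nb}{2n+2})=(0,0)$, i.e.\ $\dim R'$ a rational multiple of $(n,2n+2,n)$, which would render the corresponding inequality unsatisfiable for any $\theta$ of the prescribed form. For $n$ odd this is automatic from integrality, while for $n$ even one must exclude vectors such as $(n/2,n+1,n/2)$. Here I would invoke the Schurian property of instanton representations (recorded at the end of Subsection \ref{subsec:eqv}) via the monad equivalence of Proposition \ref{functor}: a proper subrep with proportional dimension vector would yield a sub-monad of the instanton monad complemented by a monad of the same shape, producing a non-scalar idempotent endomorphism of $R$, contradicting ${\rm Hom}(R,R)\simeq\C$.

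With this degeneracy excluded, the stability problem reduces to a convex-geometry question: the finite set $\Sigma\subset\mathbb{R}^2\setminus\{0\}$ of normal vectors $(p_{R'},q_{R'})$ must lie in some open half-plane whose polar cone meets the fourth quadrant. The distinguished subreps $K$ of \eqref{kernel} and $C^{\oplus n}$ of \eqref{cokernel} already contribute the boundary normals $(-n,0)$ and $(0,n)$, imposing precisely the defining inequalities of the fourth quadrant. For the remaining subreps, the combinatorial constraints from local injectivity (fact (1) of Subsection \ref{prelim}, inherited on subreps via fact (3)) together with the sheaf-level input $h^0(E)=0$ (valid because every instanton sheaf is stable, which forces $\bigcap_i\ker g_{\phi_i}=0$ and, dually, $\bigcap_i \ker f_{\eta_i}=0$, so $c\ge 1$ on proper nonzero subreps), confine the remaining normals to a region of $\mathbb{R}^2$ whose closure lies in a common half-plane through the origin. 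Any $(\alpha,\gamma)$ in the fourth quadrant lying in the polar cone then works.

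The main technical obstacle is the first step, namely excluding subreps with dimension vector proportional to $\dim R$. This is a genuinely structural statement about instanton monads rather than a purely numerical quiver constraint, and is where the Schurian property together with Proposition \ref{functor} is essential; the second step is then routine convex geometry.
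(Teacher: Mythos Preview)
Your approach is quite different from the paper's. The paper gives a short, non-constructive GIT argument: since $R$ is Schurian its stabilizer in ${\rm PGL}(\mathbf{v})$ is trivial, and then a result of Van den Bergh (together with King's identification of GIT-stability and $\theta$-stability) produces an invariant affine open, cut out by a semi-invariant of some weight $\chi_\theta$, on which the relevant orbit is closed; this yields $\theta$-stability directly, with no analysis of subrepresentation dimension vectors at all. Your proposal, by contrast, tries to locate an explicit $(\alpha,\gamma)$ by convex geometry on the finite set of subrep normals.

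There is a genuine gap in your first step. From a proper subrepresentation $R'\subset R$ with $\dim R'$ proportional to $\dim R$ you do \emph{not} get a non-scalar idempotent of $R$: that conclusion would require the short exact sequence $0\to R'\to R\to R/R'\to 0$ to split, which is in no way automatic. The Schurian property rules out nontrivial direct summands, not arbitrary subrepresentations; an indecomposable (hence Schurian) representation can perfectly well sit in a non-split extension of two pieces of the same dimension vector. So the argument you sketch does not exclude the case $(p,q)=(0,0)$.

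Your second step is also not the ``routine convex geometry'' you suggest. Even granting that no proper subrep has $(p,q)=(0,0)$, what you need is that \emph{all} the normals $(p_{R'},q_{R'})$ lie in a common open half-plane through the origin; it is not enough that each is nonzero. Two subreps with antipodal normals, say $(p,q)$ and $(-\lambda p,-\lambda q)$ with $\lambda>0$, would already make the system of strict inequalities infeasible. The constraints you invoke (local injectivity on subreps, global surjectivity on quotients, $h^0(E)=0$) are exactly the inputs to Lemma~\ref{sub}, but the paper only manages to convert those into an explicit common chamber for $n\le 2$; for general $n$ nothing in your outline prevents opposing normals. This is precisely why the paper resorts to the abstract GIT argument for arbitrary charge rather than pushing the combinatorics further.
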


\begin{proof}
We already observed in the end of Section \ref{subsec:eqv} that every instanton representation $R$ is Schurian. In this case, the stabilizer group of $R$ is trivial and hence we get an open set in which the generic point has trivial stabilizer.
By a result of Van den Bergh \cite[Proposition 6]{BLB}, if the stabilizer group of $R$ is zero-dimensional then there is an invariant affine open set in which the generic orbit is closed. This open set in the GIT construction is given by the non-vanishing of a relative invariant function of some weight $\chi_{\theta}$. Hence the generic point will be $\chi_{\theta}$-stable and therefore $\theta$-stable by Theorem 4.1 in \cite{K}. Finally, as the conditions of locally injective and globally surjective are open we get the result. 
\end{proof}

Having proved that the moduli spaces $\calr_{\theta}(n)$ are not always trivial, we now show that there always are at least two different stability chambers within the fourth quadrant.

\begin{Lemma}
There is a wall that destabilizes all instanton representations corresponding to non locally free instanton sheaves, in any charge.
\end{Lemma}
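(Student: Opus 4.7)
The plan is to exhibit, for any non-locally free instanton representation $R$ corresponding to a sheaf $E$, a subrepresentation $W\subset R$ of dimension vector $(m,2m,m)$ for some $m\ge 1$; the wall claim then follows from a short slope computation. To build $W$, I would first note that rank 2 reflexive sheaves on $\mathbb{P}^3$ with $c_3=0$ are automatically locally free by Hartshorne, and since instanton sheaves satisfy $c_3=0$, the assumption that $E$ is not locally free forces $E$ to be non-reflexive. Hence the natural map $E\hookrightarrow E^{\vee\vee}$ is strict with cokernel a non-zero rank-zero sheaf $Q$, giving a short exact sequence
$$
0\longrightarrow E\longrightarrow E^{\vee\vee}\longrightarrow Q\longrightarrow 0
$$
with $Q$ supported in codimension two. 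The cohomological vanishings defining the instanton condition, together with $\mu$-stability of $E^{\vee\vee}$, imply that $E^{\vee\vee}$ is itself a locally free instanton sheaf of charge $n'=n-m$ for some $m\ge 1$, and that $Q$ is a rank-zero instanton in the sense of Section \ref{prelim}.

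Now lift $E\hookrightarrow E^{\vee\vee}$ to a morphism of Beilinson monads $\varphi\colon M_E\to M_{E^{\vee\vee}}$. Using the codimension condition on $\supp(Q)$ together with the rank-zero instanton vanishings for $Q$, one shows that $\varphi$ is surjective in each of the three degrees, so its degreewise kernel is a subcomplex of $M_E$ of the shape $\op3(-1)^{\oplus m}\to\op3^{\oplus 2m}\to\op3(1)^{\oplus m}$, whose $\mathcal{H}^1$ recovers $Q$. Under the equivalence $\mathbf{F}$ of Proposition \ref{functor}, this subcomplex translates into a subrepresentation $W\subset R$ with $\dim W=(m,2m,m)$, which is plainly proper (since $m\le n-1$ forces $2m<2n+2$) and non-zero.

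The wall claim is now immediate from
$$
\theta\cdot\dim W \;=\; m\alpha+m\gamma-\frac{2nm}{2n+2}(\alpha+\gamma) \;=\; \frac{m}{n+1}(\alpha+\gamma),
$$
which is a positive multiple of $\alpha+\gamma$ because $m\ge 1$. Thus $W$ destabilizes $R$ whenever $\alpha+\gamma>0$, vanishes in $\theta$-slope on the line $\alpha+\gamma=0$, and has strictly negative $\theta$-slope for $\alpha+\gamma<0$; the line $\alpha+\gamma=0$ is therefore a wall destabilizing every instanton representation coming from a non-locally free sheaf, in any charge. I expect the main obstacle to be the sheaf-theoretic translation step: verifying that $E^{\vee\vee}$ is again an instanton and $Q$ a rank-zero instanton, and that the chain lift $\varphi$ can be chosen componentwise surjective so that its kernel has the precise shape $(m,2m,m)$. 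Each of these reduces to cohomology vanishings extracted from the long exact sequence of the short exact sequence above, combined with the Beilinson spectral sequence and the defining vanishings for instanton and rank-zero instanton sheaves.
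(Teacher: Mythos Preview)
Your approach is essentially the same as the paper's: it too uses the double-dual sequence $0\to E\to E^{\vee\vee}\to Q_E\to 0$, cites \cite{GJ} for the fact that $E^{\vee\vee}$ is a locally free instanton and $Q_E$ a rank~0 instanton, and then passes through the equivalence of Proposition~\ref{functor} to obtain a subrepresentation of $R$ with dimension vector $(d,2d,d)$ and $\theta$-slope $\tfrac{d}{n+1}(\alpha+\gamma)$. The one step you should make explicit is the paper's closing move: it invokes the preceding proposition (every instanton representation is $\theta$-stable for \emph{some} $\theta$) to conclude that a wall actually exists---your computation shows $W$ destabilizes $R$ whenever $\alpha+\gamma>0$, but the fact that $W$ has negative slope for $\alpha+\gamma<0$ says nothing about other subrepresentations, so without that proposition you have not yet shown $R$ is stable anywhere.
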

\begin{proof}
Let $E$ be a non locally free rank 2 instanton sheaf of charge $n$, and let $R$ be the corresponding instanton representation. 

By the Main Theorem in \cite{GJ}, the double dual sheaf $E^{\vee\vee}$ is a locally free instanton sheaf, and $Q_E:=E^{\vee\vee}/E$ is a rank 0 instanton sheaf. Letting $Q_R$ and $S_R$ be the representations of $\mathbf{Q}$ corresponding to the sheaves $E^{\vee\vee}$ and $Q_E$, respectively, the short exact sequence of sheaves $0\to E\to E^{\vee\vee} \to Q_E\to 0$ gives rise to the short exact sequence $0\to S_R \to R \to Q_R \to 0$ in $\mathfrak{Q}$. Since $\dim S_R=(d,2d,d)$ for some $d\ge1$, we have that
$$\theta\cdot\dim S_R = \dfrac{d}{n+1} (\alpha+\gamma), $$
So $R$ is not $\theta$-semistable when $\alpha+\gamma>0$.

According to the previous proposition, there is a stability parameter $\theta$ for which $R$ is $\theta$-stable. Since $R$ cannot be $\theta$-semistable above the line $\alpha=-\gamma$, we obtain the desired statement.
\end{proof}

Of course, our goal is to know whether there exists a stability parameter $\theta$ for which {\it every} instanton representation is $\theta$-stable. In order to do that, one must find suitable restrictions on the possible dimension vectors of subrepresentations of instanton representations.

\begin{Lemma}\label{sub}
If $S$ is a nontrivial subrepresentation of an instanton representation of charge $n$ with $\dim S=(s_{-1},s_0,s_1)$, then the following inequalities hold:
\begin{enumerate}
\item $s_{-1}+1\le s_0$;
\item $s_0-s_1 \le n-1$ when $s_1<n$;
\item $s_0-4s_1 \le 0$;
\item $s_1\ge1$.
\end{enumerate}
\end{Lemma}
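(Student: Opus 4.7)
The plan is to verify the four inequalities by combining the structural facts (1)--(4) from the preliminaries with one geometric input: the corresponding instanton sheaf $E$ is stable, hence $h^0(E)=0$. The crucial step, which underpins (3) and (4), is the vanishing of the joint kernels $\bigcap_{j=0}^{3}\ker\phi_j\subset V_0$ and $\bigcap_{i=0}^{3}\ker\eta_i\subset V_{-1}$; I would establish these first, as this is the only place in the argument that uses more than formal manipulations of the quiver.

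For the first vanishing, interpret $R$ via the monad \eqref{inst monad}: a vector $v\in\bigcap_j\ker\phi_j$, regarded as a constant section of $\op3^{\oplus 2n+2}$, satisfies $\beta(1\otimes v)=\sum_j x_j\phi_j(v)=0$, hence lies in $H^0(\ker\beta)$. The short exact sequence $0\to\im\alpha\to\ker\beta\to E\to 0$, combined with $H^0(\im\alpha)=H^0(\op3(-1)^{\oplus n})=0$, promotes $v$ to a section of $E$. Since $E$ is stable of trivial determinant by \cite[Theorem 4]{JMT2}, we have $H^0(E)=0$, whence $v=0$. For the second vanishing, any $u\in\bigcap_i\ker\eta_i$ lies in $\ker\bigl(\sum_i\lambda_i\eta_i\bigr)$ for every $\lambda$; choosing $\lambda$ in the open locus where this sum is injective, which exists by local injectivity of $R$, gives $u=0$.

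With these vanishings in hand, (3) is immediate: the map $\Phi\colon V_0\to V_1^{\oplus 4}$, $v\mapsto(\phi_0 v,\phi_1 v,\phi_2 v,\phi_3 v)$, is injective, and because $S$ is a subrepresentation it restricts to an injection $V_0'\hookrightarrow(V_1')^{\oplus 4}$, yielding $s_0\le 4s_1$. For (4), if $s_1=0$ then $\phi_j(V_0')\subset V_1'=0$ forces $V_0'\subset\bigcap_j\ker\phi_j=0$; subsequently $\eta_i(V_{-1}')\subset V_0'=0$ forces $V_{-1}'\subset\bigcap_i\ker\eta_i=0$, contradicting nontriviality of $S$.

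Finally, (1) follows from facts (3) and (1): $S$ is locally injective, so its dimension vector satisfies $s_0\ge s_{-1}+1$. For (2), fact (4) applied to the quotient $R/S$, of dimension vector $(n-s_{-1},\,2n+2-s_0,\,n-s_1)$, makes it globally surjective. Under the hypothesis $s_1<n$, the third component $n-s_1$ is positive, so the codimension count of the rank-drop locus of the associated surjective bundle map $\op3^{\oplus(2n+2-s_0)}\twoheadrightarrow\op3(1)^{\oplus(n-s_1)}$ on $\p3$ gives $2n+2-s_0\ge(n-s_1)+3$ — the version of fact (2) dual to global injectivity — yielding $s_0-s_1\le n-1$.
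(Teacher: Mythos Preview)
Your proof is correct and follows essentially the same strategy as the paper's. Items (1) and (2) are handled identically. For item (3), the paper passes through the kernel subrepresentation $K$ and the functor $\mathbf{F}$ to show that $H^0(\beta')\colon H^0(\op3^{\oplus s_0})\to H^0(\op3(1)^{\oplus s_1})$ is injective, whereas you extract the same linear-algebra content directly by first proving $\bigcap_j\ker\phi_j=0$ from $H^0(E)=0$; these are the same argument in different language, since $H^0(\beta')$ is precisely your map $\Phi|_{V_0'}$. For item (4) you argue directly from the two kernel vanishings rather than reducing to (1) and (3) as the paper does, which is a minor variation.
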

\begin{proof}
The first inequality simply reflects the fact that every subrepresentation $S$ of an instanton representation $R$  must be locally injective.

Similarly, the quotient representation $R/S$ must be globally surjective. Since 
$$ \dim R/S=(n-s_{-1},2n+2-s_0,n-s_1), $$ 
one must have, when $s_1<n$,
$$ (2n+2-s_0) - (n-s_1) \ge 3, $$
 which is equivalent to the inequality in item (2).

Next, consider the composed morphism $\phi:S\hookrightarrow R \twoheadrightarrow A^{\oplus n}$. It follows from the exact sequence in display \eqref{kernel} that $\ker\phi$ is a subrepresentation of the kernel subrepresentation of $R$, so in particular $\dim\ker\phi=(0,s_0,s_1)$. Thus $\ker\phi$ is associated, via the functor $\mathbf{F}$ of Proposition \ref{functor}, to a morphism of sheaves $\beta':\op3^{\oplus s_0} \to \op3(1)^{\oplus s_1}$. Note that $\ker\beta'$ is a subsheaf of $\ker\beta$, which has no global sections since $H^0(\ker\beta)=H^0(E)=0$. Therefore,  $H^0(\ker\beta')=0$ as well, which means that the induced map in cohomology
$$ H^0(\op3^{\oplus s_0}) \stackrel{H^0(\beta')}{\longrightarrow} H^0(\op3(1)^{\oplus s_1}) $$
must be injective, thus $s_0\le4s_1$, as desired.

Finally, if $s_1=0$, then the inequality in item (3) implies that $s_0=0$, while the first inequality implies that $s_{-1}=0$ as well.
\end{proof}

The inequalities in the previous lemma are all we need to answer our main question when $n\le2$. In fact, the case $n=1$ was already considered in \cite[Section 6]{MMS}, where it shown, in a broader context, there is $\theta$ for which that every representation corresponding to a locally free instanton of charge 1 is $\theta$-stable; we will say more about this case in Section \ref{ss:Sing} below. We close this section by considering the case $n=2$.

\begin{Prop}
There exists a stability parameter $\theta$ for which every instanton representation of charge 2 is $\theta$-stable.
\end{Prop}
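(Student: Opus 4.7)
The plan is to exhibit an explicit open cone in the fourth quadrant of the $\alpha\gamma$-plane every point of which makes every charge 2 instanton representation $\theta$-stable. By Lemma \ref{sub}, the dimension vector $(s_{-1},s_0,s_1)$ of any proper nontrivial subrepresentation $S$ of such an $R$ must satisfy $1\le s_1\le 2$, $s_0\le 4s_1$, $s_0-s_1\le 1$ when $s_1=1$, and $s_0\ge s_{-1}+1$ whenever $s_{-1}\ge 1$; together with $s_{-1}\le 2$ and $s_0\le 6$, this leaves only a short finite list of candidate vectors (roughly of the form $(0,0,1),(0,1,1),(0,2,1),(1,2,1)$ for $s_1=1$, and $(s_{-1},s_0,2)$ with $0\le s_{-1}\le 2$ and $s_{-1}+1\le s_0\le 6$ for $s_1=2$, dropping $(2,6,2)$).

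For each candidate I would require
$$3\,\theta\cdot(s_{-1},s_0,s_1)=3\alpha s_{-1}-(\alpha+\gamma)s_0+3\gamma s_1<0.$$
Running through the list, the inequalities coming from vectors with $s_{-1}=0$ or $s_0=6$ reduce to $\gamma<0$, $\alpha>0$, or $-k\alpha+\ell\gamma<0$ with $k,\ell>0$, all of which hold automatically in the fourth quadrant. The dimension vectors $(1,2,1)$, $(2,3,2)$, $(2,4,2)$ and $(2,5,2)$ each produce the condition $\alpha+\gamma<0$, while $(1,2,2)$ produces $\alpha+4\gamma<0$, which is implied by $\alpha+\gamma<0$ together with $\gamma<0$.

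Hence the nontrivial conditions collapse to the open cone $\{\alpha>0,\ \gamma<0,\ \alpha+\gamma<0\}$, which is nonempty; for concreteness one may take $(\alpha,\gamma)=(1,-2)$. Any such parameter yields a $\theta$ with respect to which every instanton representation of charge 2 is $\theta$-stable.

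The main obstacle is simply the systematic enumeration, with Lemma \ref{sub} doing the heavy lifting of bounding the dimension vectors of subrepresentations to a small finite set. It is worth noting that the binding wall $\alpha+\gamma=0$ coincides precisely with the wall identified in the previous lemma as destabilizing every non locally free instanton representation, so the region we find lies in the fourth quadrant strictly below that wall, in agreement with the structure already established.
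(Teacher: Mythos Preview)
Your argument is correct and in fact yields a bit more than the paper's: you identify the entire open cone $\{\alpha>0,\ \gamma<0,\ \alpha+\gamma<0\}$ in which every charge $2$ instanton representation is $\theta$-stable, whereas the paper fixes the one-parameter family $\theta_\epsilon=(\epsilon,(1-\epsilon)/3,-1)$ with $0<\epsilon\ll1$ and argues via the decomposition $\theta_\epsilon\cdot(s_{-1},s_0,s_1)=(s_{-1}-s_0/3)\,\epsilon+(s_0/3-s_1)$, noting that the constant term is negative whenever $s_0<3s_1$ and handling the boundary case $s_0=6$, $s_1=2$ separately. Both proofs rest entirely on Lemma~\ref{sub}; yours trades the paper's brevity for an explicit chamber and the pleasant observation that its binding wall is exactly the line $\alpha+\gamma=0$ already singled out in the preceding lemma.

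There is, however, a small omission in your case analysis. For $s_1=2$ and $s_{-1}=1$ you treat $(1,2,2)$ explicitly and $(1,6,2)$ via your ``$s_0=6$'' clause, but the vectors $(1,3,2)$, $(1,4,2)$, $(1,5,2)$ fall into neither of your two named groups. Their values of $3\,\theta\cdot S$ are $3\gamma$, $-\alpha+2\gamma$, and $-2\alpha+\gamma$ respectively, all automatically negative in the fourth quadrant, so the conclusion is unaffected; still, they should be listed. Likewise, writing the range for $s_1=2$ as $s_{-1}+1\le s_0\le 6$ without the caveat ``when $s_{-1}\ge1$'' accidentally drops $(0,0,2)$, which again contributes only $\gamma<0$.
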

\begin{proof}
We show that there exists $0 < \epsilon \ll 1$ for which every every instanton representation of charge 2 is $\theta_{\epsilon}$-stable, where $\theta_{\epsilon}=(\epsilon, (1-\epsilon)/3,-1)$. 
We have 
$$ \theta_\epsilon\cdot(s_{-1},s_0,s_1) = \left( s_{-1}-\dfrac{s_0}{3}\right)\epsilon + \dfrac{s_0}{3} - s_1. $$

By the fourth item in Lemma \ref{sub}, it is enough to consider  the cases $s_1=1,2$.
\begin{itemize}
\item Case $s_1=2$. If $s_0<6$, then $\dfrac{s_0}{3}-s_1<0$, hence, since the quantity inside the first parenthesis can only have finitely many values, one can find $0 < \epsilon \ll 1$ for which $\theta \cdot (s_{-1},s_0,2)<0$. If $s_0=6$, then $s_{-1}\le1$, so again $\theta \cdot (s_{-1},6,2)<0$.
\item Case $s_1=1$. by item (2) of Lemma \ref{sub} we have $s_0 \leq 2$ and hence $s_0/3-1<0$. Again one can find $\epsilon$ for which $\theta \cdot (s_{-1},s_0,2)<0$.
\end{itemize}
\end{proof}


\section{Description of representations in $\cali(1)$} \label{ss:Sing}

We consider again the quiver with relations $\mathbf{Q}$ and representations of this quiver with vector dimension $(1,4,1)$. If $\theta=(\alpha, \beta, \gamma)$ is a stability parameter then as $\theta \cdot (1,4,1)=0$ we get $\theta=(\alpha,-(\alpha+\gamma)/4,\gamma)$. Finally let $\calr_{\theta}(1)$ be the moduli space of semistable representations of the quiver $\mathbf{Q}$ of fixed dimension vector $(1,4,1)$. We want to establish conditions on $\alpha$ and $\gamma$ in order to get $\cali(1) \subset \calr_{\theta}(1)$ as we know that $\cali(1)$ may be seen in $\calr_{\theta}(1)$ as the set of orbits of representations which are globally surjective and globally injective.

From now on we shall use the notation $\mathbb{C}^{b}=0$ if $b=0$.

\begin{Prop}\label{existence0b1}
Every representation $R$ in $\calr_{\theta}(1)$ has subrepresentation $S$ of dimension vector $(0,b,1)$ for all $b \in \{0,1,2,3,4\}$.

\end{Prop}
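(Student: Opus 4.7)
The claim is purely linear-algebraic and in fact holds for every representation of $\mathbf{Q}$ of dimension vector $(1,4,1)$, with the $\theta$-semistability hypothesis playing no role. Write $R=(V_{-1},V_0,V_1;\{f_{\eta_i}\},\{g_{\phi_j}\})$ with $V_{-1}\cong\C$, $V_0\cong\C^4$, $V_1\cong\C$. For a subrepresentation $S$ of dimension vector $(0,b,1)$ one is forced to take $W_{-1}=0$ and $W_1=V_1$ (since $V_1$ admits only one $1$-dimensional subspace, namely itself).

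The plan is therefore to check that the two compatibility conditions $f_{\eta_i}(W_{-1})\subseteq W_0$ and $g_{\phi_j}(W_0)\subseteq W_1$ (for $i,j=0,\dots,3$) are both vacuous. The first is automatic because $W_{-1}=0$, and the second is automatic because $W_1=V_1$ is the entire codomain of each $g_{\phi_j}$. The relations $P_{ij}$ are preserved automatically since the structure maps of $S$ are restrictions of those of $R$.

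Consequently, for any $b\in\{0,1,2,3,4\}$ one can take $W_0$ to be \emph{any} $b$-dimensional subspace of $V_0=\C^4$ (for instance $W_0=0$ when $b=0$ and $W_0=V_0$ when $b=4$), and the triple $(0,W_0,V_1)$ with the ambient linear maps is the required subrepresentation. There is no genuine obstacle: the whole point is the shape of the dimension vector $(0,b,1)$, which makes both subrepresentation-compatibility conditions trivially satisfied for any choice of $W_0$.
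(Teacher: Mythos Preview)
Your argument is correct and follows essentially the same route as the paper's own proof: the paper also takes $W_{-1}=0$, $W_1=V_1$, and a $b$-dimensional subspace of $V_0$ (there realized concretely via the inclusion $j:\C^b\hookrightarrow\C^4$ into the first summand of $\C^4=\C^b\oplus\C^{4-b}$), and verifies the same two compatibility conditions. Your observation that \emph{any} $b$-dimensional $W_0$ works and that $\theta$-semistability is never used is accurate and slightly sharper in presentation, but not a genuinely different approach.
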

\begin{proof}
Let $R \in \calr_{\theta}(1)$ be the representation given by
\begin{center}
$R:$
\begin{tikzcd}
\mathbb{C} \arrow[r, "u_2" description , shift right=2]
  \arrow[r,"u_3" description, shift right=6]  
  \arrow[r, "u_1" description, shift left=2]
  \arrow[r, "u_0" description,  shift left=6]
& \mathbb{C}^{4} \arrow[r, "v_2" description , shift right=2]
  \arrow[r,"v_3" description, shift right=6]  
  \arrow[r, "v_1" description, shift left=2]
  \arrow[r, "v_0" description,  shift left=6]
& \mathbb{C}
\end{tikzcd}
\end{center}
From the decomposition $\mathbb{C}^4=\mathbb{C}^b\oplus\mathbb{C}^{4-b}$ (being trivial in case $\mathbb{C}^{b}=0$ or $\mathbb{C}^{b}=\mathbb{C}^4$) we can define for all $i \in \{0,1,2,3\}$,  $v'_i=v_ij$ where $j: \mathbb{C}^{b} \hookrightarrow \mathbb{C}^{4}$ is the inclusion in the first summand of the decomposition. It is clear that the representation

\begin{center}
$S:$
\begin{tikzcd}
 0 \arrow[r, shift right=2]
  \arrow[r, shift right=6]  
  \arrow[r, shift left=2]
  \arrow[r, shift left=6]
& \mathbb{C}^{b} \arrow[r, "v'_2" description , shift right=2]
  \arrow[r,"v'_3" description, shift right=6]  
  \arrow[r, "v'_1" description, shift left=2]
  \arrow[r, "v'_0" description,  shift left=6]
& \mathbb{C}
\end{tikzcd}
\end{center}
satisfies the relations of $\mathbf{Q}$ and it is a subrepresentation of $R$:
\begin{center}
\begin{tikzcd}
   0 \arrow[r, shift right=2]
  \arrow[r, shift right=6]  
  \arrow[r, shift left=2]
  \arrow[r, shift left=6]
  \arrow[dd]
& \mathbb{C}^{b} \arrow[r, "v'_2" description , shift right=2]
  \arrow[r,"v'_3" description, shift right=6]  
  \arrow[r, "v'_1" description, shift left=2]
  \arrow[r, "v'_0" description,  shift left=6]
  \arrow[dd,"j"]
& \mathbb{C} \arrow[dd, "1_{\mathbb{C}}"]\\
&&\\
\mathbb{C} \arrow[r, "u_2" description , shift right=2]
  \arrow[r,"u_3" description, shift right=6]  
  \arrow[r, "u_1" description, shift left=2]
  \arrow[r, "u_0" description,  shift left=6]
& \mathbb{C}^{4} \arrow[r, "v_2" description , shift right=2]
  \arrow[r,"v_3" description, shift right=6]  
  \arrow[r, "v_1" description, shift left=2]
  \arrow[r, "v_0" description,  shift left=6]
& \mathbb{C}
\end{tikzcd}
\end{center}
being the second square commutative from the expression $v'_i=v_ij$ for all $i \in \{0,1,2,3\}$. 
\end{proof}

We are interested in knowing the possible dimension vectors of subrepresentations of a representation $R \in \cali(1)$. For this we have to study the globally surjective and the globally injective representations in more detail.

\begin{Def}
Let $R$ be the representation of the quiver with relations $\mathbf{Q}$
\begin{center}
{\rm (I)}   $R:$
\begin{tikzcd}
\mathbb{C} \arrow[r, "u_2" description , shift right=2]
  \arrow[r,"u_3" description, shift right=6]  
  \arrow[r, "u_1" description, shift left=2]
  \arrow[r, "u_0" description,  shift left=6]
& \mathbb{C}^{4} \arrow[r, "v_2" description , shift right=2]
  \arrow[r,"v_3" description, shift right=6]  
  \arrow[r, "v_1" description, shift left=2]
  \arrow[r, "v_0" description,  shift left=6]
& \mathbb{C}
\end{tikzcd}
\end{center}
We say $R$ is {\it globally surjective of rank $r$} if $R$ is globally surjective and the rank of the matrix $M=[u_0,u_1,u_2,u_3]$ where the $u_i$ are the column vectors of $M$ is equal to $r$. Similarly, we say $R$ is {\it globally injective of rank $r$} if $R$ is globally injective and the rank of the matrix $N=[v_0,v_1,v_2,v_3]^{T}$ where the $v_i$ are the row vectors of $N$ is equal to $r$.  
\end{Def}

{\it Remark:} It is clear that we could have changed the roles of row and column vectors or used just one of them in the above definition but the notation introduced here will be important to what follows.

Now we are going to explain a few facts that shall be used throughout the rest of the paper. 
Let again $R$ be the representation as in (I) such that the $4 \times 4$ matrix $N=[v_0, v_1, v_2, v_3]^{T}$, where the row vectors are the vectors $v_i \in {\rm Hom}(\mathbb{C}^4,\mathbb{C})=\mathbb{C}^4$, has rank $b \in \{0,1,2,3,4\}$.
Then we take $g=(1,A^{-1},1)$ where $A \in {\rm Gl}(\mathbb{C}^4)$ is the invertible matrix that we multiply $N$ on the right in order to get a matrix of the kind

$$ \tilde{N}=\begin{bmatrix}
      I_{b} & 0\\
      *  & 0
\end{bmatrix}$$   

where $I_{b}$ represents the identity matrix of order $b$ and $*$ represents a possible nontrivial submatrix of order $(4-b) \times b$. Acting $g$ on $R$ we get a representation 

\begin{center}
 $\tilde{R}:$
\begin{tikzcd}
\mathbb{C} \arrow[r, "\tilde{u}_2" description , shift right=2]
  \arrow[r,"\tilde{u}_3" description, shift right=6]  
  \arrow[r, "\tilde{u}_1" description, shift left=2]
  \arrow[r, "\tilde{u}_0" description,  shift left=6]
& \mathbb{C}^{4} \arrow[r, "\tilde{v}_2" description , shift right=2]
  \arrow[r,"\tilde{v}_3" description, shift right=6]  
  \arrow[r, "\tilde{v}_1" description, shift left=2]
  \arrow[r, "\tilde{v}_0" description,  shift left=6]
& \mathbb{C}
\end{tikzcd}
\end{center}
in the same orbit of $R$ such that $\tilde{N}=[\tilde{v}_0, \tilde{v}_1, \tilde{v}_2, \tilde{v}_3]^{T}$, i.e., $\tilde{v}_0, \tilde{v}_1, \tilde{v}_2, \tilde{v}_3$ are the rows of $\tilde{N}$.
It is clear that the sets of dimension vectors of subrepresentations of $R$ and ${\tilde R}$ are the same. 

Observe that in the special case $b=4$ we get in the same orbit of $R$ a representation with the canonical basis of $\mathbb{C}^4$ in the places of $\{v_0, v_1, v_2, v_3\}$. 

Analogously, given a representation $R$ such that the $4\times4$ matrix $M=[u_0,u_1,u_2,u_3]$, where the column vectors are the vectors $u_i \in {\rm Hom}(\mathbb{C},\mathbb{C}^4)=\mathbb{C}^4$, has rank $b \in \{0,1,2,3,4\}$ we can take $g=(1,A,1)$ where $A$ is the invertible matrix that we multiply $M$ on the left in order to find a matrix of the kind:

$$ \tilde{M}=\begin{bmatrix}
      I_{b} & *\\
      0  & 0
\end{bmatrix}$$   

where $I_{b}$ represents the identity matrix of order $b$ and $*$ represents a possible nontrivial submatrix of order $b \times (4-b)$. Acting $g$ on $R$ we get a representation 

\begin{center}
 $\tilde{R}:$
\begin{tikzcd}
\mathbb{C} \arrow[r, "\tilde{u}_2" description , shift right=2]
  \arrow[r,"\tilde{u}_3" description, shift right=6]  
  \arrow[r, "\tilde{u}_1" description, shift left=2]
  \arrow[r, "\tilde{u}_0" description,  shift left=6]
& \mathbb{C}^{4} \arrow[r, "\tilde{v}_2" description , shift right=2]
  \arrow[r,"\tilde{v}_3" description, shift right=6]  
  \arrow[r, "\tilde{v}_1" description, shift left=2]
  \arrow[r, "\tilde{v}_0" description,  shift left=6]
& \mathbb{C}
\end{tikzcd}
\end{center}
where $\tilde{u}_0, \tilde{u}_1, \tilde{u}_2, \tilde{u}_3$ are the column vectors of the matrix $\tilde{M}$. Again, in case $b=4$ we have that $\{\tilde{u}_0,\tilde{u}_1,\tilde{u}_2,\tilde{u}_3\}$ is the canonical basis of $\mathbb{C}^4$ and we also have that the sets of dimension vectors of subrepresentations of $R$ and $\tilde{R}$ are the same.

We are going to use the discussion above to get a characterization of both globally surjective representations and globally injective representations in terms of the dimension vectors of their subrepresentations.

\begin{Theorem}\label{teorema}
Let $R$ be a representation in $\calr_{\theta}(1)$. Then
\begin{enumerate}
\item $R$ is globally injective if, and only if, there does not exist subrepresentation $S$ of $R$ of dimension vector $(1,b,1)$ for $b \in \{0,1,2,3\}$. 
\item $R$ is globally surjective if, and only if, there does not exist subrepresentation $S$ of $R$ of dimension vector $(0,b,0)$ for $b \in \{1,2,3,4\}$. 
\end{enumerate}
\end{Theorem}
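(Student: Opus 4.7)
The plan is to translate the existence of a subrepresentation of each prescribed dimension vector into an elementary linear-algebra condition on the matrices $M = [u_0, u_1, u_2, u_3]$ and $N = [v_0, v_1, v_2, v_3]^{T}$ introduced above, and then match that condition with the definition of global surjectivity or global injectivity. A subrepresentation of $R$ of dimension vector $(a,b,c)$ amounts to a triple of subspaces $W_{-1} \subset \mathbb{C}$, $W_0 \subset \mathbb{C}^4$, $W_1 \subset \mathbb{C}$ of the stated dimensions such that $u_i(W_{-1}) \subset W_0$ and $v_i(W_0) \subset W_1$ for every $i$; the quadratic relations $P_{ij}$ are then automatically inherited from $R$, so they impose no extra constraint.

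For item (2), the condition $(a,b,c) = (0,b,0)$ forces $W_{-1} = 0$ and $W_1 = 0$, so the only genuine requirement is $v_i(W_0) = 0$ for $i = 0, 1, 2, 3$, i.e.\ $W_0 \subset \bigcap_{i=0}^{3} \ker v_i$. A subrepresentation of dimension $(0,b,0)$ with $b \geq 1$ therefore exists if and only if $\bigcap_i \ker v_i \neq 0$, equivalently if and only if the row span of $N$ is a proper subspace of $\mathbb{C}^4$, which is exactly the failure of global surjectivity of $R$. Conversely, if $R$ is globally surjective, the $v_i$ span $\mathrm{Hom}(\mathbb{C}^4,\mathbb{C})$, and no nonzero $W_0$ can satisfy $v_i(W_0) = 0$ for every $i$, ruling out all $b \in \{1,2,3,4\}$.

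For item (1), the condition $(a,b,c) = (1,b,1)$ forces $W_{-1} = \mathbb{C}$ and $W_1 = \mathbb{C}$. The requirement $v_i(W_0) \subset W_1 = \mathbb{C}$ is then automatic, while $u_i(W_{-1}) \subset W_0$ reduces to $u_i \in W_0$ for every $i$, i.e.\ $W_0 \supset \mathrm{span}\{u_0, u_1, u_2, u_3\}$. Setting $d := \mathrm{rk}(M)$, such a $W_0$ of dimension $b$ exists exactly for $b \in \{d, d+1, \ldots, 4\}$. Hence a subrepresentation of dimension $(1,b,1)$ with $b \in \{0,1,2,3\}$ exists if and only if $d \leq 3$, which is precisely the failure of $\{u_0, u_1, u_2, u_3\}$ to be a basis of $\mathbb{C}^4$, i.e.\ the failure of global injectivity. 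In the other direction, if $R$ is globally injective then the smallest $W_0$ containing the $u_i$ has dimension $4$, so no admissible $b \leq 3$ occurs.

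I do not anticipate a serious obstacle: once the meaning of a subrepresentation is unpacked, both items reduce to the observation that global surjectivity is the non-vanishing of $\bigcap \ker v_i = 0$ and global injectivity is the full-rank condition on the $u_i$, and the quadratic relations $P_{ij}$ do not interfere. The only point worth double-checking during the write-up is the edge case $b=0$ in item (1), which corresponds to all $u_i$ being zero, and the fact that the statement is existential in $b$ rather than universal; both are handled by the dimension count $d \leq 3 \iff d \in \{0,1,2,3\}$.
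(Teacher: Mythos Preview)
Your argument is correct. Both you and the paper reduce the two items to rank conditions on the matrices $M=[u_0,u_1,u_2,u_3]$ and $N=[v_0,v_1,v_2,v_3]^T$, but the execution differs. The paper first passes to a representative in the same $G$-orbit for which $M$ (or $N$) is in a block normal form $\begin{bmatrix} I_b & * \\ 0 & 0 \end{bmatrix}$, and then, in the ``not globally injective/surjective $\Rightarrow$ subrepresentation exists'' direction, writes down an explicit subrepresentation using that normal form (and checks by hand that the relations $P_{ij}$ hold); in the other direction it argues by contradiction via the quotient. You instead characterize a subrepresentation intrinsically as a triple of subspaces $(W_{-1},W_0,W_1)$ stable under the maps, which immediately yields the conditions $W_0 \supset \operatorname{span}\{u_i\}$ for item~(1) and $W_0 \subset \bigcap_i \ker v_i$ for item~(2), and both directions follow from the single dimension count $b \ge \operatorname{rk} M$, respectively $b \le 4-\operatorname{rk} N$. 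Your route is shorter and coordinate-free, and your remark that the relations $P_{ij}$ are automatically inherited by subobjects replaces the paper's direct verification; the paper's version, on the other hand, produces concrete matrices for the subrepresentations, in keeping with the explicit skew-symmetric description of $\calr_\theta(1)$ used later.
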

\begin{proof}
Let $R$ be as in (I). 

Item 1).

Suppose $R$ is globally injective. Then $\{u_0,u_1,u_2,u_3\}$ is a basis for ${\rm Hom}(\mathbb{C},\mathbb{C}^4)=\mathbb{C}^4$. Let us prove the implication by contradiction.

If $S$ is a subrepresentation of dimension vector $(1,b,1)$, for $b<4$, then we get the quotient $R/S$ as below
\begin{center}
\begin{tikzcd}
   \mathbb{C} \arrow[r, "u_2" description , shift right=2]
  \arrow[r, "u_3" description, shift right=6]  
  \arrow[r, "u_1" description, shift left=2]
  \arrow[r, "u_0" description, shift left=6]
  \arrow[dd]
& \mathbb{C}^{4} \arrow[r, "v_2" description , shift right=2]
  \arrow[r,"v_3" description, shift right=6]  
  \arrow[r, "v_1" description, shift left=2]
  \arrow[r, "v_0" description,  shift left=6]
  \arrow[dd,"p"]
& \mathbb{C} \arrow[dd]\\
&&\\
0 \arrow[r, shift right=2]
  \arrow[r, shift right=6]  
  \arrow[r, shift left=2]
  \arrow[r, shift left=6]
& \mathbb{C}^{4-b} \arrow[r, shift right=2]
  \arrow[r, shift right=6]  
  \arrow[r, shift left=2]
  \arrow[r, shift left=6]
& 0
\end{tikzcd}
\end{center}
The kernel of the map $p$ has dimension $b<4$ and from the diagram above we see that $\{u_0,u_1,u_2,u_3\}$ is contained in it. As $\{u_0,u_1,u_2,u_3\}$ are linearly independent we have a contradiction.

Now suppose $R$ is not globally injective and suppose the rank of the matrix $M=[u_0,u_1,u_2,u_3]$ is $b<4$. From the discussion above we can consider $R$ in such a way that the matrix $M$ is of the kind
 $$ M=\begin{bmatrix}
      I_{b} & *\\
      0  & 0
\end{bmatrix}$$
where $u_0, u_1, u_2, u_3$ are the column vectors of the matrix $M$.

We show that there exists subrepresentation $S$ of vector dimension $(1,b,1)$. Indeed, let $S$ be the representation denoted by
\begin{center}
$S:$
\begin{tikzcd}
\mathbb{C} \arrow[r, "u'_2" description , shift right=2]
  \arrow[r,"u'_3" description, shift right=6]  
  \arrow[r, "u'_1" description, shift left=2]
  \arrow[r, "u'_0" description,  shift left=6]
& \mathbb{C}^{b} \arrow[r, "v'_2" description , shift right=2]
  \arrow[r,"v'_3" description, shift right=6]  
  \arrow[r, "v'_1" description, shift left=2]
  \arrow[r, "v'_0" description,  shift left=6]
& \mathbb{C}
\end{tikzcd}
\end{center}
where $\{u'_0,u'_1,u'_2,u'_3\}$ are the column vectors of the submatrix $M'$ of $M$ given by the first $b$ rows of $M$ 
$$M'=
\begin{bmatrix}
I_{b} & *
\end{bmatrix}
$$

and $\{v'_0,v'_1,v'_2,v'_3\}$ are the row vectors of the submatrix $N'$ of $N=[v_0,v_1,v_2,v_3]^{T}$ (where each $v_i$ is a row vector) given by the first $b$ columns of $N$. Later we are going to show that $S$ also satisfies the relations of the quiver $\mathbf{Q}$.

Consider the map $\phi: \mathbb{C}^{b} \rightarrow \mathbb{C}^{4}$ given by

$$\phi=
\begin{bmatrix}
I_{b}\\
0
\end{bmatrix}
$$
where $I_{b}$ is the identity matrix of order $b$. We need to show that the diagram below commute: 
\begin{center}
\begin{tikzcd}
   \mathbb{C} \arrow[r, "u'_2" description , shift right=2]
  \arrow[r, "u'_3" description, shift right=6]  
  \arrow[r, "u'_1" description, shift left=2]
  \arrow[r, "u'_0" description, shift left=6]
  \arrow[dd, "1_{\mathbb{C}}"]
& \mathbb{C}^{b} \arrow[r, "v'_2" description , shift right=2]
  \arrow[r,"v'_3" description, shift right=6]  
  \arrow[r, "v'_1" description, shift left=2]
  \arrow[r, "v'_0" description,  shift left=6]
  \arrow[dd,"\phi"]
& \mathbb{C} \arrow[dd, "1_{\mathbb{C}}"]\\
&&\\
\mathbb{C} \arrow[r,"u_2" description, shift right=2]
  \arrow[r, "u_3" description, shift right=6]  
  \arrow[r, "u_1" description, shift left=2]
  \arrow[r, "u_0" description, shift left=6]
& \mathbb{C}^{4} \arrow[r, "v_2" description, shift right=2]
  \arrow[r, "v_3" description, shift right=6]  
  \arrow[r, "v_1" description, shift left=2]
  \arrow[r, "v_0" description, shift left=6]
& \mathbb{C}
\end{tikzcd}
\end{center}
We have $\phi u'_i=u_i$ for all $i \in \{1,2,3,4\}$ since 

$$\phi \cdot M'=
\begin{bmatrix}
I_{b}\\
0
\end{bmatrix}
\begin{bmatrix}
I_{b} & *
\end{bmatrix}
=
\begin{bmatrix}
I_{b} & *\\
0 & 0
\end{bmatrix}
=M
$$

and also we have $v_i\phi=v'_i$ for all $i \in \{0,1,2,3\}$ by the definition of the $v'_i$ themselves.

Observe that $S$ obeys the relations of the quiver $\mathbf{Q}$:

$$v'_iu'_j + v'_ju'_i=v_i \phi u'_j+ v_j \phi u'_i=v_iu_j+v_ju_i=0$$

for $0 \leq i \leq j \leq 3$.

Hence $S$ is a subrepresentation of $R$ of dimension vector $(1,b,1)$ with $b<4$.

Item 2).

Suppose $R$ is globally surjective. Then we know we may consider $\{v_0,v_1,v_2,v_3\}$ as the canonical basis of $\mathbb{C}^4={\rm Hom}(\mathbb{C}^4, \mathbb{C})$.

If there exists a subrepresentation $S$ of dimension vector $(0,b,0)$ where $b \in \{1,2,3,4\}$ then by the diagram 
\begin{center}
\begin{tikzcd}
   0 \arrow[r, shift right=2]
  \arrow[r, shift right=6]  
  \arrow[r, shift left=2]
  \arrow[r, shift left=6]
  \arrow[dd]
& \mathbb{C}^{b} \arrow[r, shift right=2]
  \arrow[r, shift right=6]  
  \arrow[r, shift left=2]
  \arrow[r, shift left=6]
  \arrow[dd,"j"]
& 0 \arrow[dd]\\
&&\\
\mathbb{C} \arrow[r,"u_2" description, shift right=2]
  \arrow[r, "u_3" description, shift right=6]  
  \arrow[r, "u_1" description, shift left=2]
  \arrow[r, "u_0" description, shift left=6]
& \mathbb{C}^{4} \arrow[r, "v_2" description, shift right=2]
  \arrow[r, "v_3" description, shift right=6]  
  \arrow[r, "v_1" description, shift left=2]
  \arrow[r, "v_0" description, shift left=6]
& \mathbb{C}
\end{tikzcd}
\end{center}
we get $v_i j=0$ for all $i \in \{0,1,2,3 \}$. But this implies $j=0$ and hence $\mathbb{C}^b=0$ which is a contradiction.

On the other hand suppose $R$ is not globally surjective and let the rank of the matrix $N=[v_0,v_1,v_2,v_3]^{T}$, where $v_0,v_1,v_2,v_3$ are the row vectors of $N$, be $b'<4$. Then we may consider $R$ such that the matrix $N$ is of the form

$$N=\begin{bmatrix} I_{b'} & 0\\ * & 0 \end{bmatrix}$$.

Set $b=4-b'$ with $b' \in \{0,1,2,3\}$. We shall prove that there exists subrepresentation $S$ of dimension vector $(0,b,0)$.

Consider the representation $S$
 \begin{center}
\begin{tikzcd}
0 \arrow[r, shift right=2]
  \arrow[r, shift right=6]  
  \arrow[r, shift left=2]
  \arrow[r, shift left=6]
& \mathbb{C}^{b} \arrow[r, shift right=2]
  \arrow[r, shift right=6]  
  \arrow[r, shift left=2]
  \arrow[r, shift left=6]
& 0
\end{tikzcd}
\end{center}
which trivially satisfies the relations of the quiver $\mathbf{Q}$. We take the injective map $\phi: \mathbb{C}^{b} \rightarrow \mathbb{C}^4$ given by matrix

$$\phi= \begin{bmatrix} 0\\ I_{b}  \end{bmatrix}$$

From equation 
$$
N \cdot \phi=
\begin{bmatrix}
I_{b'} & 0 \\ 
* & 0
\end{bmatrix}
\begin{bmatrix}
0\\
I_{b}
\end{bmatrix}
=
\begin{bmatrix}
0\\
0
\end{bmatrix}
$$

we get $v_i\phi=0$ for all $i \in \{0,1,2,3 \}$ which implies that $S$ is in fact a subrepresentation of $R$:

\begin{center}
\begin{tikzcd}
   0 \arrow[r, shift right=2]
  \arrow[r, shift right=6]  
  \arrow[r, shift left=2]
  \arrow[r, shift left=6]
  \arrow[dd]
& \mathbb{C}^{b} \arrow[r, shift right=2]
  \arrow[r, shift right=6]  
  \arrow[r, shift left=2]
  \arrow[r, shift left=6]
  \arrow[dd,"\phi"]
& 0 \arrow[dd]\\
&&\\
\mathbb{C} \arrow[r,"u_2" description, shift right=2]
  \arrow[r, "u_3" description, shift right=6]  
  \arrow[r, "u_1" description, shift left=2]
  \arrow[r, "u_0" description, shift left=6]
& \mathbb{C}^{4} \arrow[r, "v_2" description, shift right=2]
  \arrow[r, "v_3" description, shift right=6]  
  \arrow[r, "v_1" description, shift left=2]
  \arrow[r, "v_0" description, shift left=6]
& \mathbb{C}
\end{tikzcd}
\end{center}

\end{proof}

\begin{Prop}\label{posto0}
Let $R$ be a representation in $\calr_{\theta}(1)$ which is globally (surjective) injective. Then $R$ is not locally (injective) surjective if, and only if, $R$ has subrepresentation $S$ of dimension vector $(1,b,0)$ with $b \in \{0,1,2,3,4\}$.
\end{Prop}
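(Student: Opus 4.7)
The approach is to reduce to a normal form via the $\mathrm{GL}$-action on the middle vertex and then exploit the parity constraint on ranks of antisymmetric matrices. I will detail the plan for one reading, namely $R$ globally injective and not locally surjective; the other reading (globally surjective and not locally injective) will then follow by the obvious duality of the quiver $\mathbf{Q}$ and its relations that swaps the two outer vertices and interchanges $\eta_i \leftrightarrow \phi_i$.

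Using the change-of-basis discussion preceding Theorem~\ref{teorema}, after acting by an element of the form $(1,A,1)$ with $A\in\mathrm{GL}(\mathbb{C}^4)$ I may assume, without altering the collection of dimension vectors of subrepresentations of $R$, that $M=[u_0,u_1,u_2,u_3]$ is the $4\times 4$ identity matrix, so $u_i=e_i$ is the $i$-th standard basis vector. The quiver relations $v_iu_j+v_ju_i=0$ then amount to $(v_i)_j+(v_j)_i=0$ for all $i,j$, that is, the $4\times 4$ matrix $N=[v_0,v_1,v_2,v_3]^T$ is antisymmetric, whence the pivotal observation $\rk(N)\in\{0,2,4\}$.

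For the forward direction, $R$ fails to be locally surjective precisely when the linear subspace $\{\lambda\in\mathbb{C}^4:\sum_i\lambda_iv_i=0\}$ has codimension at most $1$, i.e.\ when $\rk(N)\le 1$; combined with the parity constraint, this forces $\rk(N)=0$, that is, $v_0=v_1=v_2=v_3=0$. Taking $V'_{-1}=\mathbb{C}$, $V'_0=\mathbb{C}^4$, $V'_1=0$ with identity inclusions on the first two vertices then produces a subrepresentation $S$ of $R$ with $\dim S=(1,4,0)$, since all containments $u_i(V'_{-1})\subseteq V'_0$ and $v_i(V'_0)\subseteq V'_1=0$ and the relations $P_{ij}$ are automatic.

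For the converse, suppose $R$ is globally injective and admits a subrepresentation of dimension vector $(1,b,0)$. Since $V'_{-1}=\mathbb{C}$, the subrepresentation conditions force $u_i\in V'_0$ for every $i$; but global injectivity gives $\mathrm{span}(u_0,\ldots,u_3)=\mathbb{C}^4$, so $V'_0=\mathbb{C}^4$ and $b=4$. The condition $v_i(V'_0)\subseteq V'_1=0$ then yields $v_i=0$ for all $i$, so $\rk(N)=0$ and $R$ fails local surjectivity. I expect the genuinely non-routine step to be the parity observation for antisymmetric matrices: it is what rules out the intermediate case $\rk(N)=1$ and reduces ``not locally surjective'' to the clean condition $N=0$, from which the existence of $S$ is immediate.
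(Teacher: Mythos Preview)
Your argument is correct and follows essentially the same route as the paper's proof: normalize one family of maps to the standard basis, then read off the structure of a $(1,b,0)$-subrepresentation from the vanishing of the other family.  Your version is in fact slightly more complete than the paper's at one point.  In the direction ``not locally surjective $\Rightarrow$ existence of $S$'', the paper simply asserts that failure of local surjectivity forces $v_0=v_1=v_2=v_3=0$; but by definition this only gives $\rk(N)\le 1$.  You close this gap by first normalizing $u_i=e_i$, deducing from the relations that $N$ is antisymmetric, and invoking the parity constraint $\rk(N)\in\{0,2,4\}$ to conclude $N=0$.  This antisymmetry argument is exactly the one the paper uses later in Section~\ref{sec:modspc}, so you are not importing anything foreign.

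One small caution about your duality shortcut: the involution on $\mathbf{Q}$ you invoke is realized by vector-space duality, which sends subrepresentations of $R$ to \emph{quotients} of $R^\vee$.  A subrepresentation of $R$ with dimension vector $(1,b,0)$ therefore corresponds to a subrepresentation of $R^\vee$ with dimension vector $(1,4-b,0)$, not $(1,b,0)$.  Since you only claim existence for \emph{some} $b\in\{0,\dots,4\}$, and $4-b$ ranges over the same set, the conclusion is unaffected; but it is worth recording that the paper's direct treatment of the globally surjective case produces $b=0$ (the subrepresentation $(1,0,0)$), whereas the dual of your $(1,4,0)$ subrepresentation also yields $b=0$, so everything matches.
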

\begin{proof}
Firstly, suppose $R$ globally injective. Let $S$ be a subrepresentation of $R$ of dimension vector $(1,b,0)$ with $b \in \{0,1,2,3,4\}$:
\begin{center}
(II)\begin{tikzcd}
   \mathbb{C} \arrow[r, "u'_2" description , shift right=2]
  \arrow[r, "u'_3" description, shift right=6]  
  \arrow[r, "u'_1" description, shift left=2]
  \arrow[r, "u'_0" description, shift left=6]
  \arrow[dd]
& \mathbb{C}^{b} \arrow[r, "v'_2" description , shift right=2]
  \arrow[r,"v'_3" description, shift right=6]  
  \arrow[r, "v'_1" description, shift left=2]
  \arrow[r, "v'_0" description,  shift left=6]
  \arrow[dd,"\phi"]
& 0 \arrow[dd]\\
&&\\
\mathbb{C} \arrow[r,"u_2" description, shift right=2]
  \arrow[r, "u_3" description, shift right=6]  
  \arrow[r, "u_1" description, shift left=2]
  \arrow[r, "u_0" description, shift left=6]
& \mathbb{C}^{4} \arrow[r, "v_2" description, shift right=2]
  \arrow[r, "v_3" description, shift right=6]  
  \arrow[r, "v_1" description, shift left=2]
  \arrow[r, "v_0" description, shift left=6]
& \mathbb{C}
\end{tikzcd}
\end{center}
From the diagram (II), as $\{u_0, u_1, u_2, u_3 \}$ is a basis of ${\rm Hom}(\mathbb{C},\mathbb{C}^4)=\mathbb{C}^4$ we get $b=4$ and hence $\phi$ is an isomorphism. Then from $v_i\phi=0$ we get $v_i=0$ for all $i \in \{0,1,2,3\}$, so the rank of $R$ is zero and $R$ is not locally surjective.
On the other hand, if $R$ is not locally surjective then $v_0=v_1=v_2=v_3=0$ and hence $R$ has subrepresentation $S$ of dimension vector $(1,4,0)$: using the notation of the diagram (II) it is enough to set $\phi=1_{\mathbb{C}^4}$ and $u'_i=v'_i$ for all $i$.

Now take $R$ globally surjective. Let $\{ v_0, v_1, v_2,v_3\}$ be the canonical basis of ${\rm Hom}(\mathbb{C}^4,\mathbb{C})=\mathbb{C}^4$. Let $S$ be a subrepresentation of $R$ of vector dimension $(1,b,0)$ where $b \in \{0,1,2,3,4\}$. By the diagram (II), we get $v_i\phi=0$ for all $i \in \{0,1,2,3\}$ and hence $\phi=0$. Thus from the same diagram we have $u_0=u_1=u_2=u_3=0$, i.e., $R$ is not locally injective.

On the other hand, if $u_0=u_1=u_2=u_3=0$ then by taking $\mathbb{C}^b=0$ and $\phi=0$ we get that $S$ is a subrepresentation of vector dimension $(1,0,0)$.
\end{proof}

Now we are able to characterize the representations in $\cali(1)$ in terms of the dimension vectors of its subrepresentations.

\begin{Prop}\label{fibrados}
Let $R$ be a representation in $\cali(1)$. Then the dimension vectors of its subrepresentations are exactly $(0,b,1)$ for all $b \in \{0,1,2,3,4\}$.
\end{Prop}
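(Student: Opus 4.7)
The plan is to combine Proposition \ref{existence0b1}, Theorem \ref{teorema}, and Proposition \ref{posto0} to enumerate the dimension vectors of subrepresentations of any $R \in \cali(1)$. Recall that by Proposition \ref{functor}(3), an element of $\cali(1)$ corresponds to an instanton representation $R$ of $\mathbf{Q}$ which is \emph{globally injective}; by the very definition of instanton representation, $R$ is moreover \emph{globally surjective}. Both properties will be in force throughout.

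Existence of the claimed subrepresentations is immediate: Proposition \ref{existence0b1}, applied to $R \in \calr_\theta(1)$, already produces a subrepresentation of dimension vector $(0,b,1)$ for every $b \in \{0,1,2,3,4\}$. For the converse inclusion I would enumerate every other possible dimension vector $(s_{-1},s_0,s_1)$ with $s_{-1} \in \{0,1\}$, $s_0 \in \{0,1,2,3,4\}$, $s_1 \in \{0,1\}$, and rule them out one family at a time using the previous results. Since $R$ is globally injective, Theorem \ref{teorema}(1) excludes every vector of the form $(1,b,1)$ with $b \in \{0,1,2,3\}$. Since $R$ is globally surjective, Theorem \ref{teorema}(2) excludes every vector of the form $(0,b,0)$ with $b \in \{1,2,3,4\}$. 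Finally, global surjectivity of $R$ certainly implies local surjectivity, so the globally-injective branch of Proposition \ref{posto0} rules out every vector of the form $(1,b,0)$ with $b \in \{0,1,2,3,4\}$.

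After this enumeration the only remaining vectors are the trivial $(0,0,0)$, the total $(1,4,1)$ corresponding to $R$ itself, and the desired family $(0,b,1)$ for $b \in \{0,1,2,3,4\}$; this yields the claim. I do not expect a serious obstacle here, as the statement is essentially a bookkeeping corollary of the three preceding results. The only subtlety worth flagging is the translation of global surjectivity into local surjectivity before invoking Proposition \ref{posto0}, so as to meet the hypothesis of its globally-injective branch.
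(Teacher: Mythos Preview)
Your proposal is correct and follows essentially the same approach as the paper: enumerate the four families $(0,b,0)$, $(1,b,1)$, $(0,b,1)$, $(1,b,0)$ of possible dimension vectors, obtain the $(0,b,1)$ ones from Proposition~\ref{existence0b1}, and exclude the other three families via Theorem~\ref{teorema} and Proposition~\ref{posto0}. The paper's proof is terser and does not single out which branch of Proposition~\ref{posto0} is being used, but the content is the same; your remark that global surjectivity implies local surjectivity (so that the globally-injective branch applies) is a small clarification rather than a different argument.
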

\begin{proof}
The possible dimension vectors of subrepresentations of a representation in $R_{\theta}(1)$ are of the kind $(0,b,0), (1,b,1), (0,b,1)$ and $(1,b,0)$. By Proposition \ref{existence0b1}, $R$ has subrepresentations of dimension vectors $(0,b,1)$ for all $b \in \{0,1,2,3,4\}$. As $R$ is both globally injective and globally surjective, by Theorem \ref{teorema}, it does not have subrepresentations of dimension vectors $(0,b,0), (1,b,1)$ and, by Proposition \ref{posto0}, it also does not have subrepresentations of dimension vectors $(1,b,0)$.   
\end{proof}


\section{Chamber decomposition for $\calr_\theta(1)$}\label{sec:modspc}

As the stability parameter $\theta=(\alpha,-(\alpha+\gamma)/4,\gamma)$ depends only on the values of $\alpha$ and $\gamma$, we can talk about $(\alpha,\gamma)$-stability. In this section, we obtain a wall-and-chamber decomposition of the real $\alpha\gamma$-plane of stability parameters.

In this setting a representation $R$ of dimension vector $(1,4,1)$ is $(\alpha,\gamma)$-stable if, and only if, every proper subrepresentation $S$ of dimension vector $(a,b,c)$ satisfies

$$\theta \cdot (a,b,c) < 0 \Leftrightarrow$$
$$(4a-b)\alpha + (4c-b)\gamma < 0$$

From Proposition \ref{existence0b1} and Proposition \ref{fibrados}, we know that every representation $R \in \calr_{\theta}(1)$ has subrepresentation $S$ of dimension vector $(0,b,1)$ for all $b \in \{0,1,2,3,4\}$ and the representations in $\cali(1)$ have exactly subrepresentations of this kind. 

Then for $R \in \cali(1)$ to be stable it is required that $\theta \cdot (0,b,1) < 0$ for every subrepresentation $S$ of dimension vector $(0,b,1)$, that is,

$$(4-b)\gamma < b\alpha$$
for $b \in \{0,1,2,3,4\}$.

The 5 possible values of $b$ give us $5$ inequalities whose intersection is the fourth quadrant of the real plane determined by $(\alpha,\gamma)$.

Thus for values of $(\alpha,\gamma)$ in the fourth quadrant we have $\cali(1) \subset \calr_{\theta}(1)$ and for values of $(\alpha,\gamma)$ outside the fourth quadrant we have $\calr_{\theta}(1)=\emptyset$, by Lemma \ref{empty}.

We are now interested in knowing which are exactly the orbits of representations in $\calr_{\theta}(1)\setminus\cali(1)$ for values of $(\alpha,\gamma)$ in the fourth quadrant.

\begin{Prop} \label{unstable} If $R \in R_{\theta}(1)$ is a globally (surjective) injective representation which is not locally (injective) surjective then $R$ is not $(\alpha,\gamma)$-stable for all values of $\alpha$ and $\gamma$.  

\end{Prop}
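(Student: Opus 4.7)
The plan is to handle both cases of the statement by producing, for every $(\alpha,\gamma)\in\R^2$, a concrete proper nonzero subrepresentation $S\subset R$ with $\theta\cdot\dim S\ge 0$. I would proceed as follows.

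First I would invoke Proposition \ref{posto0} to obtain a subrepresentation $S\subset R$ of dimension vector $(1,b,0)$. Inspecting the construction given in its proof shows that when $R$ is globally injective but not locally surjective one may take $b=4$ (since all the $v_i$ vanish and $\phi$ can be the identity on $\C^4$), while when $R$ is globally surjective but not locally injective the natural choice is $b=0$. The corresponding $\theta$-weights are
\[
\theta\cdot(1,4,0)=\alpha-(\alpha+\gamma)=-\gamma,\qquad \theta\cdot(1,0,0)=\alpha,
\]
so these subrepresentations destabilize $R$ in the half-planes $\gamma\le 0$ and $\alpha\ge 0$, respectively.

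To cover the remaining half-planes, I would use the fact that any representation $R$ of dimension vector $(1,4,1)$ automatically contains two canonical subrepresentations: the simple representation $C$ at the third vertex, of dimension vector $(0,0,1)$, and the kernel subrepresentation $K$ appearing in \eqref{kernel}, of dimension vector $(0,4,1)$. Direct computation gives
\[
\theta\cdot(0,0,1)=\gamma,\qquad \theta\cdot(0,4,1)=-\alpha,
\]
so $C$ destabilizes whenever $\gamma\ge 0$ and $K$ destabilizes whenever $\alpha\le 0$. Putting everything together, in the first case of the statement the pair $\{(1,4,0),(0,0,1)\}$ destabilizes $R$ for every $(\alpha,\gamma)$, while in the second case the pair $\{(1,0,0),(0,4,1)\}$ does the same.

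I do not anticipate any real obstacle here: once Proposition \ref{posto0} is in hand and one notes that $C$ and $K$ are always subrepresentations of an arbitrary $R$ of dimension $(1,4,1)$, the proof collapses to the four elementary sign computations above, which jointly cover the whole $\alpha\gamma$-plane. The only mild subtlety is verifying from the proof of Proposition \ref{posto0} that the specific values $b=4$ and $b=0$ are indeed attained in the two cases; this is immediate from the explicit constructions given there.
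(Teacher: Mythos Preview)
Your proof is correct and rests on the same key input as the paper's argument, namely Proposition~\ref{posto0}. The difference is mainly in packaging. The paper works with the $(1,b,0)$ subrepresentation for unspecified $b$ and shows that the region $\theta\cdot(1,b,0)<0$ has empty intersection with the fourth quadrant; instability outside the fourth quadrant is then implicit from Lemma~\ref{empty}. You instead pin down the specific values $b=4$ (globally injective case) and $b=0$ (globally surjective case) from the proof of Proposition~\ref{posto0}, and pair each of these with the canonical subrepresentations $C$ and $K$ from \eqref{kernel} (equivalently, Proposition~\ref{existence0b1}) so that the two $\theta$-weights $-\gamma,\gamma$ (resp.\ $\alpha,-\alpha$) cover the entire $\alpha\gamma$-plane by elementary sign considerations. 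Your version is thus slightly more self-contained, since it does not appeal to Lemma~\ref{empty} for the three non-fourth quadrants; the paper's version is marginally shorter because it offloads that part. Substantively the two arguments are the same.
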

\begin{proof}
In either case, from Proposition \ref{posto0}, we know $R$ has subrepresentation $S$ of dimension vector $(1,b,0)$ with $\b \in \{0,1,2,3,4\}$. Then

$$\theta \cdot (1,b,0) < 0 \Leftrightarrow 4\alpha+b(-\alpha-\gamma)<0$$.

If $b=0$ then $\alpha <0$ and if $b>0$ then $\gamma > \frac{4-b}{b}\alpha$. In both cases the intersection with the fourth quadrant is empty and hence $R$ is not $(\alpha,\gamma)$-stable.

\end{proof}

\begin{Prop}\label{planeprop}
Let $(\alpha,\gamma)$ be a value in the fourth quadrant of the real plane. If $R$ is not globally injective then $R$ is $(\alpha,\gamma)$-stable only for $\gamma < -\alpha$. If $R$ is not globally surjective then $R$ is $(\alpha,\gamma)$-stable only for $\gamma >-\alpha$.
\end{Prop}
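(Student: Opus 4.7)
The plan is to apply Theorem \ref{teorema} directly: in each case it produces an explicit destabilizing subrepresentation with a specific dimension vector, and then one only needs to evaluate $\theta\cdot\dim S$ on that dimension vector to read off the constraint on $(\alpha,\gamma)$.

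First I would recall that for $\theta=(\alpha,-(\alpha+\gamma)/4,\gamma)$, the pairing against a dimension vector $(a,b,c)$ is
\[
\theta\cdot(a,b,c)=a\alpha-\tfrac{b}{4}(\alpha+\gamma)+c\gamma=\tfrac{1}{4}\bigl((4a-b)\alpha+(4c-b)\gamma\bigr),
\]
so the $\theta$-stability condition for a subrepresentation $S\subset R$ reduces to $(4a-b)\alpha+(4c-b)\gamma<0$.

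For the first assertion, suppose $R$ is not globally injective. Then Theorem \ref{teorema}(1) guarantees a subrepresentation $S\subset R$ of dimension vector $(1,b,1)$ with $b\in\{0,1,2,3\}$. A direct substitution gives
\[
\theta\cdot(1,b,1)=\tfrac{1}{4}(4-b)(\alpha+\gamma),
\]
and since $4-b>0$ in the relevant range, the requirement $\theta\cdot\dim S<0$ forces $\alpha+\gamma<0$, i.e.\ $\gamma<-\alpha$. Thus outside the half-plane $\gamma<-\alpha$ no such $R$ can be $(\alpha,\gamma)$-stable.

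For the second assertion, suppose $R$ is not globally surjective. Theorem \ref{teorema}(2) yields a subrepresentation of dimension vector $(0,b,0)$ with $b\in\{1,2,3,4\}$, and
\[
\theta\cdot(0,b,0)=-\tfrac{b}{4}(\alpha+\gamma);
\]
since $b>0$, stability forces $\alpha+\gamma>0$, i.e.\ $\gamma>-\alpha$. This gives the complementary constraint, and intersecting each with the fourth quadrant cuts out exactly the stated regions. There is no real obstacle here once Theorem \ref{teorema} is in hand; the only small care is to keep track of which values of $b$ are allowed in each case so that the coefficient of $(\alpha+\gamma)$ has a definite sign.
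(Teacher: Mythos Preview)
Your proof is correct and follows exactly the same approach as the paper: invoke Theorem \ref{teorema} to produce a subrepresentation of dimension vector $(1,b,1)$ (respectively $(0,b,0)$), compute $\theta\cdot\dim S$, and use the sign of the coefficient $4-b>0$ (respectively $b>0$) to extract the inequality on $\alpha+\gamma$. The only cosmetic difference is that the paper writes the pairing as $(4-b)\alpha+(4-b)\gamma$ rather than factoring out $\tfrac{1}{4}(4-b)(\alpha+\gamma)$ as you do.
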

\begin{proof}
By Theorem \ref{teorema}, if $R$ is not globally injective then there is a subrepresentation $S$ of vector dimension $(1,b,1)$ with $\b \in \{0,1,2,3\}$. Consider $R$ $(\alpha,\gamma)$-stable. Thus

$$\theta \cdot (1,b,1) < 0 \Leftrightarrow (4-b)\alpha + (4-b)\gamma < 0$$

which implies $\gamma < -\alpha$ since $b \in \{0,1,2,3\}$.

Again by Theorem \ref{teorema}, if $R$ is not globally surjective then there is a subrepresentation $S$ of vector dimension $(0,b,0)$ with $\b \in \{1,2,3,4\}$. If $R$ is $(\alpha,\gamma)$-stable then

$$\theta \cdot (0,b,0) < 0 \Leftrightarrow b \frac{-\alpha-\gamma}{4}<0$$

which implies $\gamma>-\alpha$ since $b \neq 0$.
\end{proof}

\begin{Prop}
The moduli spaces associated to values of $(\alpha,\gamma)$ in the fourth quadrant of the real plane such that $\gamma < -\alpha$ are formed exactly by the globally surjective representations which are locally injective as the moduli spaces for $\gamma > -\alpha$ are formed exactly by the globally injective representations which are locally surjective.
\end{Prop}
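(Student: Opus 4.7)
The plan is to handle the two regions $\gamma<-\alpha$ and $\gamma>-\alpha$ (inside the fourth quadrant) separately, each by proving both inclusions of the stated equality. The two arguments run in parallel, so I would write out the case $\gamma<-\alpha$ in detail and indicate the changes for the other region at the end.

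For the forward implication in $\gamma<-\alpha$, I would combine Propositions \ref{planeprop} and \ref{unstable}. If $R\in\calr_\theta(1)$ were not globally surjective, Proposition \ref{planeprop} would force $\gamma>-\alpha$, contradicting the region, so $R$ is globally surjective; Proposition \ref{unstable} then rules out failure of local injectivity, because such a representation admits no $(\alpha,\gamma)$ making it stable. For the reverse implication, let $R$ be globally surjective and locally injective. I would enumerate the possible dimension vectors $(a,b,c)$ of proper nonzero subrepresentations: Theorem \ref{teorema}(2) excludes every $(0,b,0)$ with $b\geq 1$, and Proposition \ref{posto0} excludes every $(1,b,0)$ for $0\leq b\leq 4$. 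What remains are $(0,b,1)$ with $0\leq b\leq 4$ and $(1,b,1)$ with $0\leq b\leq 3$. Plugging into the identity
\[ 4\,\theta\cdot(a,b,c)=(4a-b)\alpha+(4c-b)\gamma \]
yields the two expressions $-b\alpha+(4-b)\gamma$ and $(4-b)(\alpha+\gamma)$, both non-positive throughout the closed region $\alpha\geq 0$, $\gamma\leq -\alpha$, and strictly negative throughout its interior.

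For the region $\gamma>-\alpha$ the argument is entirely parallel: Theorem \ref{teorema}(1) replaces (2), Proposition \ref{posto0} is used in its ``globally injective / locally surjective'' version, and the two remaining dimension vectors become $(0,b,0)$ with $b\geq 1$ and $(0,b,1)$ with $0\leq b\leq 4$, whose weights $-b(\alpha+\gamma)$ and $-b\alpha+(4-b)\gamma$ are non-positive throughout $\alpha\geq 0$, $\alpha+\gamma\geq 0$, $\gamma\leq 0$, and strictly negative in the interior.

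I do not expect a serious obstacle: the whole argument is a bookkeeping reduction of ``destabilising subrepresentation'' to ``forbidden dimension vector'', and that reduction has already been carried out in Theorem \ref{teorema} and Proposition \ref{posto0}. The only minor nuisance is that on the coordinate axis inside each region ($\alpha=0$ in the first, $\gamma=0$ in the second) one of the inequalities above collapses to an equality, so a handful of representations turn out to be merely semistable rather than strictly stable; this is harmless for the set-theoretic identification of the points of the moduli space claimed by the proposition.
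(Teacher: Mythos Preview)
Your forward implication is exactly the paper's argument: given an $(\alpha,\gamma)$-stable $R$, invoke Proposition~\ref{planeprop} to force global surjectivity (resp.\ injectivity) in the appropriate half of the quadrant, then Proposition~\ref{unstable} to secure local injectivity (resp.\ surjectivity). That is in fact all the paper writes.

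You go further and also prove the reverse inclusion, namely that every globally surjective and locally injective $R$ is $(\alpha,\gamma)$-stable when $\gamma<-\alpha$ (and dually). The paper's own proof leaves this direction unaddressed, so your argument actually fills a gap in the printed proof. Your case enumeration via Theorem~\ref{teorema}(2) and Proposition~\ref{posto0} is correct, and the weight computations for the surviving vectors $(0,b,1)$ and $(1,b,1)$ (resp.\ $(0,b,0)$ and $(0,b,1)$ in the other chamber) are accurate. One small simplification: your worry about strictly semistable points on the axes is unnecessary, since inside the open fourth quadrant and off the wall $\alpha+\gamma=0$ no proper nonzero dimension vector $(a,b,c)$ with $0\le a,c\le 1$, $0\le b\le 4$ satisfies $(4a-b)\alpha+(4c-b)\gamma=0$; hence semistable and stable coincide throughout both open chambers, and the set-theoretic identification is exact.
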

\begin{proof}
Let $(\alpha,\gamma)$ be in the fourth quadrant and let $R$ be a representation whose orbit is in the moduli space associated to $(\alpha,\gamma)$.
By Proposition \ref{planeprop}, $R$ must be either globally injective or globally surjective. 
If $\gamma > -\alpha$ then again by Proposition \ref{planeprop}, $R$ must be globally injective and by Proposition \ref{unstable} it must be locally surjective. Analogously, if $\gamma < -\alpha$ then $R$ must be globally surjective and locally injective.
\end{proof}

Now we are going to prove that we can see the moduli spaces associated to $(\alpha,\gamma)$ in the fourth quadrant as compactifications of the open subset $\cali(1) \subset \calr_{\theta}(1)$, all of them isomorphic to $\mathbb{P}^5$.

Let $(\alpha,\gamma)$ be in the fourth quadrant such that $\gamma < -\alpha$. 


Let $R$ be a globally surjective representation of nontrivial rank (locally injective) in an fixed orbit of $\calr_{\theta}(1)$: 
\begin{center}
$R:$
\begin{tikzcd}
\mathbb{C} \arrow[r, "u_2" description , shift right=2]
  \arrow[r,"u_3" description, shift right=6]  
  \arrow[r, "u_1" description, shift left=2]
  \arrow[r, "u_0" description,  shift left=6]
& \mathbb{C}^{4} \arrow[r, "v_2" description , shift right=2]
  \arrow[r,"v_3" description, shift right=6]  
  \arrow[r, "v_1" description, shift left=2]
  \arrow[r, "v_0" description,  shift left=6]
& \mathbb{C}
\end{tikzcd}
\end{center}
Up to the action of a convenient $g \in G$ we know we can consider $\{v_0,v_1,v_2,v_3\}$ as being the canonical basis. In this case the representation $R$ is uniquely determined by the values of $\{u_0,u_1,u_2,u_3\}$ up to the multiplication of a nonzero scalar. 

Since $\{v_0,v_1,v_2,v_3\}$ is the canonical basis, from the relations of the quiver $\mathbf{Q}$ we get:

\begin{center}$u_0= \begin{bmatrix} 0\\a\\b\\c \end{bmatrix}$, $u_1= \begin{bmatrix} -a\\0\\d\\e \end{bmatrix}$, $u_2= \begin{bmatrix} -b\\-d\\0\\f \end{bmatrix}$, $u_3= \begin{bmatrix} -c\\-e\\-f\\0 \end{bmatrix}$\end{center}

Hence there is a bijective correspondence between orbits in $\calr_{\theta}(1)$ and nontrivial skew-symmetric matrices 

$$\tilde{R}=
\begin{bmatrix}
0 & -a & -b & -c\\
a & 0 & -d & -e\\
b & d & 0 & -f\\
c & e & f & 0\\
\end{bmatrix}$$

up to the multiplication of a nonzero scalar.

Thus there exists a bijective correspondence between orbits of $\calr_{\theta}(1)$ and points of $\mathbb{P}^5$ whose homogeneous coordinates can be represented by $[a:b:c:d:e:f]$, the entries of the skew symmetric matrix above.

Further, one can easily check that ${\rm det}(\tilde{R})=(be-af-dc)^2$. Since $\tilde{R}$ is skew-symmetric, there exists an invertible matrix $P$ such that $P^t\tilde{R}P$ is of the kind

$$\begin{bmatrix}
0 & \lambda_1 & 0 & 0\\
-\lambda_1 & 0 & 0 & 0\\
0 & 0 & 0 & \lambda_2\\
0 & 0 & -\lambda_2 & 0\\
\end{bmatrix}$$

and hence the rank of $\tilde{R}$ is 0, 2 or 4. We can not have ${\rm rank}(\tilde{R})=0$ since this would imply $u_0=u_1=u_2=u_3=0$, that is, the representation $R$ would not be locally injective.

We know $R$ is a globally surjective representation of rank 4 if and only if $R$ is also globally injective, that is, $R \in \cali(1)$. Thus we can identify $\cali(1)$ with the open set given by the complement of the quadric ${\rm det}(\tilde{R})=0$ in $\mathbb{P}^5$. On the other hand, the instanton sheaves in $\call(1)\setminus\cali(1)$ are in correspondence, by Proposition \ref{functor}, with the globally surjective representations of rank 2 and hence with the points in the quadric ${\rm det}(\tilde{R})=0$ in $\mathbb{P}^5$.

Similarly, if we take $(\alpha,\gamma)$ in the fourth quadrant such that $\gamma > -\alpha$ and we take $R$ a globally injective representation of nontrivial rank in an fixed orbit of $\calr_{\theta}(1)$ then we get again that $R_{\theta}(1)=\mathbb{P}^5$ is a compactification of the open set $\cali(1)$ which is the complement of a quadric. In this case the points of the quadric are in correspondence with the globally injective representations of rank 2 which can be seen as the perverse sheaves dual to the instanton sheaves in $\call(1)\setminus\cali(1)$ by Proposition \ref{functor}.

We have therefore completed the proof of the Main Thorem.



\bibliographystyle{amsalpha}

\end{document}